\newtheorem{remark}{Remark}[section]
\newtheorem{example}{Example}[section]
\newtheorem{definition}{Definition}[section]
\newtheorem{proposition}{Proposition}[section]
\newtheorem{corollary}{Corollary}[section]
\newtheorem{lemma}{Lemma}[section]
\newtheorem{theorem}{Theorem}[section]
\newenvironment{proof}[1]{\noindent \textsc{Proof:} #1}{\hfill \rule{2mm}{2mm}}
\begin{document}

\begin{center}
 {\large Ordinal Sums of Fuzzy Negations: Main Classes and Natural Negations}
\end{center}

\begin{center}
Annaxsuel A. de Lima$^{a,b}$, Benjam\'in Bedregal$^b$, Ivan Mezzomo$^c$
\end{center}

\footnotesize       
\begin{center}
 $^a$ Instituto Federal de Educa\c{c}\~ao, Ci\^encia e Tecnologia do Rio Grande do Norte -- IFRN \\ Campus S\~ao Paulo do Potengi, Rio Grande do Norte, Brazil. E-mail: annaxsuel.araujo\@ ifrn.edu.br  \\
$^b$ Universidade Federal do Rio Grande do Norte -- UFRN \\ Programa de P\'os-Gradua\c{c}\~ao em Sistemas e Computa\c{c}\~ao -- PPgSC \\ Departamento de Inform\'atica e Matem\'atica Aplicada  -- DIMAp \\ Natal, Rio Grande do Norte, Brazil. E-mail: bedregal\@ dimap.ufrn.br \\
$^c$ Universidade Federal Rural de Semi-\'Arido -- UFERSA \\ Departamento de Ci\^encias Naturais, Matem\'atica e Estat\'istica -- DCME \\ Mossor\'o, Rio Grande do Norte, Brazil. E-mail: imezzomo\@ ufersa.edu.br
\end{center}

\normalsize


\begin{abstract}
In the context of fuzzy logic, ordinal sums provide a method for constructing new functions from existing functions, which can be triangular norms, 
triangular conorms, fuzzy negations, copulas, overlaps, uninorms, fuzzy implications, among others. As our main contribution, we establish conditions 
for the ordinal sum of a family of fuzzy negations to be a fuzzy negation of a specific class, such as strong, strict, continuous, invertible and 
frontier. Also, we relate the natural negation of the ordinal sum on families of t-norms, t-conorms and fuzzy implications with the ordinal sum of the 
natural negations of the respective families of t-norms, t-conorms and fuzzy implications. This motivated us to introduces a new kind of ordinal sum for families of fuzzy implications.
\end{abstract}

\noindent \textbf{Keywords:} Fuzzy connectives and aggregation operators,  fuzzy negations, ordinal sums, classes of fuzzy negations, natural fuzzy negations.

\section{Introduction}

The concept of the fuzzy set was introduced by Zadeh (1965) and, since then, several mathematical concepts such as number, group, topology, differential 
equation, and so on, have been fuzzified. There are several ways to extend the propositional connectives for a set $[0,1]$, but in general these 
extensions do not preserves all the properties of the classical logical connectives. Triangular norms (t-norms) and triangular conorms (t-conorms) were first 
studied by Menger \cite{menger} and also by Schweizer and Sklar \cite{sklar} in probabilistic metric spaces and they are used to represent the logical 
conjunction in fuzzy logic and the interception of fuzzy sets, whereas t-conorms are used to represent the logical disjunction in fuzzy logic and the union 
in fuzzy set theory.

In 1965, L. A. Zadeh introduced the notion of fuzzy negation in \cite{Zadeh65}, known as standard negation, in order to represent the logical negation and 
the complement of fuzzy sets. Since then, several important classes of fuzzy negations have been proposed with different motivations, as we can see in 
\cite{est, FKL99, hig, lowen, ov, trillas}. Fuzzy negations have applications in several areas, such as decision making, stock investment, computing with
words, mathematical morphology and associative memory,  as the presented in \cite{bedregal12,bent,FKL99,GMM16,VS08,zhao}.

The ordinal sums construction was first introduced, in the context of semigroups, by Climescu in \cite{Cli46} and Clifford in \cite{Cli54}.
In the context of fuzzy logic, the ordinal sums were first studied  for triangular norms and triangular conorms in \cite{SS63} in order to provide a method to 
construct new t-norms and t-conorms from other t-norms and t-conorms (for more details see \cite{klement}). However, the ordinal sums of several others 
important fuzzy connectives also has been studied, such as, for example, the ordinal sums of copulas \cite{Nel06}, overlap functions \cite{DB14}, 
uninorms \cite{MeZ16,MeZ17}, fuzzy implications \cite{DK16,Su15} and fuzzy negations \cite{BSJIFS}. In particular, the ordinal sums of fuzzy negations 
proposed in \cite{BSJIFS} were made in the context of Morgan's triples and they were not deeply studied.

In this paper, we consider the notion of the ordinal sums of a family of fuzzy negations, as introduced in \cite{BSJIFS}, and prove some results involving these 
concepts. In particular, we establish conditions for the ordinal sum of a family of fuzzy negations resulting in a fuzzy negation belonging to a class of fuzzy 
negations, such as strict, strong, frontier, continuous and invertible.

This paper is organized as follows: Section 2 provides a review of concepts such as t-norms, t-conorms, fuzzy implications, fuzzy negations, 
natural fuzzy negations, ordinal sums of a family of t-norms, t-conorms and fuzzy implications. In Section 3, we prove that the ordinal sum of a family 
of fuzzy negations is a fuzzy negation and we prove results involving concepts of ordinal sums of a family of fuzzy negations and equilibrium point. 
In Section 4, we establish conditions for the ordinal sum of a family of fuzzy negations resulting in a fuzzy negation belonging to a class of fuzzy 
negations such as strict, strong, frontier, continuous and invertible. In Section 5, we define the left ordinal sum of a family of fuzzy implications 
and prove results involving ordinal sums of a family of t-norms, t-conorms and fuzzy implications. Also, we prove that the natural negation of the left 
ordinal sum of a family of fuzzy implications is the same to the ordinal sum of a family of fuzzy negations. Finally, Section 6 contains the final 
considerations and future works.

\section{Preliminaries}\label{sect2}

In this section, we will briefly review some basic concepts  which are necessary for the development of this paper. The definitions and additional results can be found in \cite{alsina, atanassov99, bedregal10, bedregal12, bustince99, bustince03, FodorRoubens1994, klement}.

\subsection{t-norms, t-conorms, fuzzy implications  and fuzzy negations}

\begin{definition}
A function $T:[0,1]^{2}\rightarrow [0,1]$ is a t-norm if, for all $x,y,z\in [0,1]$, the following axioms are satisfied:

1. Symmetry: $T(x,y)=T(y,x)$;

2. Associativity: $T(x, T(y,z))=T(T(x,y),z)$;

3. Monotonicity: If $x\leq y$, then $T(x,z)\leq T(y,z)$;

4. One identity: $T(x,1)=x$.\\
\end{definition}

A t-norm $T$ is called positive if it satifies the condition: $T(x, y)=0$ iff $x=0$ or $y=0$.\\

\begin{example} 
Some examples of t-norms:

1. G\"odel t-norm: $T_G(x, y)=\min(x, y)$;

2. Product t-norm: $T_P(x, y)=x \cdot y$;

3. \L ukasiewicz t-norm: $T_L(x, y)= \max(0, x+y-1)$;

4. Drastic t-norm: \begin{eqnarray*}
T_D(x, y)=\left\{
\begin{array}{ll}
0 & \mbox{if} ~ (x, y)\in [0,1[^2; \\
\min(x, y) & \mbox{otherwise.}
\end{array}
\right.
\end{eqnarray*}
\hfill\rule{2mm}{2mm}
\end{example}

\begin{definition}
A function $S:[0,1]^{2}\rightarrow [0,1]$ is a t-conorm if, for all $x,y,z\in [0,1]$, the following axioms are satisfied:

1. Symmetry: $S(x,y)=S(y,x)$;

2. Associativity: $S(x, S(y,z))=S(S(x,y),z)$;

3. Monotonicity: If $x\leq y$, then $S(z, x)\leq S(z, y)$;

4. Zero identity: $S(x,0)=x$.\\
\end{definition}

A t-conorm $S$ is called positive if  it satifies the condition: $S(x, y)=1$ iff $x=1$ or $y=1$.\\
\begin{example} 
Some examples of t-conorms:

1. G\"odel t-conorm: $S_G(x, y)=\max(x, y)$;

2. Probabilistic sum: $S_P(x, y)=x+y-x \cdot y$;

3. \L ukasiewicz t-conorm: $S_L(x, y)= \min(x+y, 1)$;

4. Drastic sum: \begin{eqnarray*}
S_D(x, y)=\left\{
\begin{array}{ll}
1 & \mbox{if} ~ (x, y)\in ]0,1]^2; \\
\max(x, y) & \mbox{otherwise.}
\end{array}
\right.
\end{eqnarray*}
\hfill\rule{2mm}{2mm}
\end{example}

\begin{definition}\cite[Definition 1.1.1]{BJ08}
A function $J:[0,1]^2\rightarrow [0,1]$ is called a fuzzy implication if it is satisfies the following conditions:

J1: $J$ is non-increasing with respect to the first variable;

J2: $J$ is non-decreasing with respect to the second variable;

J3: $J(0, 0) = 1$;

J4: $J(1, 1) = 1$;

J5: $J(1, 0) = 0$.
\end{definition}

\begin{example} 
Some examples of fuzzy implications:\\
1. G\"odel implication:  \begin{eqnarray*}
J_G(x, y)=\left\{
\begin{array}{ll}
1 & \mbox{if} ~ x \leq y; \\
y & \mbox{otherwise.}
\end{array}
\right.
\end{eqnarray*}
2. Rescher implication:  \begin{eqnarray*}
J_{RS}(x, y)=\left\{
\begin{array}{ll}
1 & \mbox{if} ~ x \leq y; \\
0 & \mbox{otherwise.}
\end{array}
\right.
\end{eqnarray*}
3. Kleene-Dienes implication: $J_{KD}(x, y)= \max(1-x, y)$.\\
\end{example}

A function $N:[0,1]\rightarrow [0,1]$ is a fuzzy negation if

 N1: $N(0)=1$ and $N(1)=0$;

 N2:  If $x\leq y$, then $N(x)\geq N(y)$, for all $x,y\in [0,1]$.

A fuzzy negations $N$ is strict if it is continuous and strictly decreasing, i.e.,  $N(x)<N(y)$ when $y<x$. A fuzzy negation $N$ satisfying the condition N3 is called strong.

N3:  $N(N(x))=x$ for each $x\in [0,1]$.

A fuzzy negation is called crisp if it satisfies N4

N4:  For all $x\in[0,1], N(x)\in \{0,1\}$.\\

\begin{example} 
Some examples of fuzzy negations:

1. Standard negation: $N^S(x)=1-x$;

2. A strict non-strong negation: $N^{S^2}(x)=1-x^2$;

3. A strong non-standard negation:  $N_S^2(x)=\sqrt{1-x^2}$.
\hfill\rule{2mm}{2mm}\\
\end{example}

Note that:
\begin{enumerate}
\item If $N$ is strong then it has an inverse $N^{-1}$ which is also a strict fuzzy negation;
\item If $N$ is strong then $N$ is strict.
\end{enumerate}

A fuzzy negation $N$ is said to be \textit{non-vanishing} if $N(x)=0$ iff $x=1$ and $N$ is said to be \textit{non-filling} if $N(x)=1$ iff $x=0$. A fuzzy negation $N$ that is simultaneously non-vanishing and non-filling is called \textit{frontier} \cite{DB14b}.\\

An \textit{equilibrium point} of a fuzzy negation $N$ is a value $e\in [0,1]$ such that $N(e) = e$.\\

\begin{definition}\cite[Definition 2.3.1]{BJ08}
Let $T$ be a  t-norm. The function $N_T: [0,1]\rightarrow [0,1]$ defined as 
\begin{eqnarray*}
N_T(x)=\sup\{y \in [0,1]: T(x, y)=0\}
\end{eqnarray*}
 for each $x\in [0,1]$, is called the \textit{natural  fuzzy negation of $T$} or the negation induced by $T$. In addition, let $S$ be a  t-conorm. The function $N_S: [0,1]\rightarrow [0,1]$ defined as 
\begin{eqnarray*} 
 N_S(x)=\inf\{y \in [0,1]: S(x, y)=1\}
\end{eqnarray*} 
  for each $x\in [0,1]$, is called the \textit{natural  fuzzy negation of $S$} or the negation induced by $S$.\\
\end{definition}

\begin{remark}\cite[Remark 2.3.2 (i)]{BJ08}\label{remNT}
Clearly $N_T$ and $N_S$ are, in fact, fuzzy negations.\\
\end{remark}

\begin{definition}\cite[Definition 1.4.15]{BJ08}\label{DNJ}
Let $J$ be a fuzzy implication. The function  $N_J: [0,1]\rightarrow [0,1]$ defined by
\begin{eqnarray}\label{NJ}
N_J(x) = J(x, 0)
\end{eqnarray}
for all $x\in [0,1]$, is called the natural negation of $J$ or the negation induced by $J$.\\
\end{definition}

\begin{remark}\cite[Lemma 1.4.14]{BJ08}\label{remNJ}
Clearly $N_J$ is in fact a fuzzy implications.\\
\end{remark}





\begin{definition}\cite[Definition 2.5]{BSJIFS} 
Let $T$ be a  t-norm, $S$ be a t-conorm and $N$ be a strict fuzzy negation. $T_N$ is the $N$-dual of $T$ if, for all $x, y\in [0,1]$, $T_N(x,y)=N^{-1}(T(N(x), N(y)))$. Similarly, $S_N$ is the $N$-dual of $S$ if, for all $x, y\in [0,1]$, $S_N(x,y)=N^{-1}(S(N(x),$ $N(y)))$.\\
\end{definition}

\begin{proposition}\cite[Theorem 3.2]{weber}
Let $T$ be a  t-norm, $S$ be a t-conorm and $N$ be a strict fuzzy negation. Then, $T_N$ is a t-conorm and $S_N$ is a t-norm.\\
\end{proposition}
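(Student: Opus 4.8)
The plan is to verify directly the four defining axioms, treating $T_N$ and $S_N$ in parallel. The first observation is that since $N$ is strict, it is a continuous strictly decreasing bijection of $[0,1]$ onto itself; hence its inverse $N^{-1}$ exists, is itself a strict fuzzy negation, and satisfies the cancellation identities $N(N^{-1}(u))=u$ and $N^{-1}(N(u))=u$ for all $u\in[0,1]$. These cancellations, together with the boundary values $N(0)=1$ and $N(1)=0$, are the only facts about $N$ that the argument needs, so I would isolate them at the outset and then reuse them repeatedly. Strictness is exactly what guarantees that $N^{-1}$ is well defined, which is why it cannot be dropped.

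For $T_N$, I would check the t-conorm axioms in turn. Symmetry is immediate from the symmetry of $T$, since $N(x)$ and $N(y)$ merely swap roles. For the zero identity I would compute $T_N(x,0)=N^{-1}(T(N(x),N(0)))=N^{-1}(T(N(x),1))=N^{-1}(N(x))=x$, using $N(0)=1$ and the one-identity of $T$. Monotonicity requires one order-reversal at the input and one at the output: if $x\le y$ then $N(y)\le N(x)$, so $T(N(y),N(z))\le T(N(x),N(z))$ by monotonicity of $T$, and applying the decreasing map $N^{-1}$ reverses the inequality back to $T_N(x,z)\le T_N(y,z)$; symmetry then extends this to the second argument.

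The substantive step, and the one I expect to be the main obstacle, is associativity. Here I would exploit the cancellation identity to peel off the inner $N^{-1}$: since $N(T_N(y,z))=T(N(y),N(z))$, one gets $T_N(x,T_N(y,z))=N^{-1}(T(N(x),T(N(y),N(z))))$, and symmetrically $T_N(T_N(x,y),z)=N^{-1}(T(T(N(x),N(y)),N(z)))$. Associativity of $T$ then makes the two inner expressions coincide, so the outer $N^{-1}$ yields the same value. The verification for $S_N$ is entirely dual: symmetry and associativity go through verbatim with $S$ in place of $T$, monotonicity is identical, and the one-identity uses the other boundary value, namely $S_N(x,1)=N^{-1}(S(N(x),N(1)))=N^{-1}(S(N(x),0))=N^{-1}(N(x))=x$ via $N(1)=0$ and the zero-identity of $S$. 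Thus each dual inherits exactly the axioms transported across the order-reversing conjugation by $N$, with the roles of the two neutral elements interchanged.
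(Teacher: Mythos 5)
Your proof is correct: the cancellation identities $N\circ N^{-1}=N^{-1}\circ N=\mathrm{id}$ (which do require strictness, as you note), together with the boundary values of $N$, are exactly what is needed, and each of the four axioms transfers as you describe. Note that the paper itself gives no proof of this proposition --- it is quoted verbatim from Weber \cite{weber}, Theorem 3.2 --- so there is nothing to compare against; your direct axiom-by-axiom verification is the standard argument for this duality result and is complete as written.
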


If the negation is standard, then $T_N$ is called dual t-conorm of $T$ and  $S_N$ is called dual t-norm of $S$.

\subsection{Ordinal sums of t-norms, t-conorms and fuzzy implications}

In this subsection, we will introduce the notion of ordinal sums of a family of t-norms and t-conorms, and some important results that will be used in the course of this work. For more information, see \cite{BSJIFS, klement, weber}.\\

\begin{proposition}\cite{klement}\label{PTSum}
Let $(T_i)_{i\in I}$ be a family of t-norms and $(]a_i, b_i[)_{i\in I}$ be a family of nonempty, pairwise disjoint open subintervals of $[0,1]$. Then the function $T_I:[0,1]^2 \rightarrow [0,1]$ defined by
\begin{eqnarray}\label{TSum}
T_I(x,y)= \left\{
\begin{array}{ll}
a_i+(b_i-a_i) ~T_i\left(\dfrac{x-a_i}{b_i-a_i}, \dfrac{y-a_i}{b_i-a_i}\right) & \mbox{if}~  (x, y)\in [a_i, b_i]^2,  \\
\min(x, y) & \mbox{otherwise}.
\end{array}
\right.
\end{eqnarray}
is a t-norm which is called the ordinal sum of the summands $(a_i, b_i, T_i)_{i\in I}$.\\
\end{proposition}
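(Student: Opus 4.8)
The plan is to show that $T_I$ inherits the four t-norm axioms from the summands $T_i$ and from the minimum, the only delicate axiom being associativity. For each $i\in I$ it is convenient to introduce the increasing affine bijection $\phi_i:[0,1]\to[a_i,b_i]$ given by $\phi_i(t)=a_i+(b_i-a_i)t$, with inverse $\phi_i^{-1}(s)=\frac{s-a_i}{b_i-a_i}$, so that on the square $[a_i,b_i]^2$ the definition reads
\begin{eqnarray*}
T_I(x,y)=\phi_i\big(T_i(\phi_i^{-1}(x),\phi_i^{-1}(y))\big).
\end{eqnarray*}
Symmetry of $T_I$ is then immediate: the condition $(x,y)\in[a_i,b_i]^2$ is symmetric in $x,y$, and both $\min$ and each $T_i$ are symmetric. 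For the one identity, I would note that if $(x,1)\in[a_i,b_i]^2$ then necessarily $b_i=1$, whence $\phi_i^{-1}(1)=1$ and the identity axiom of $T_i$ gives $T_I(x,1)=\phi_i(\phi_i^{-1}(x))=x$; otherwise $T_I(x,1)=\min(x,1)=x$.

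Before monotonicity and associativity I would record two elementary facts about the summands. First, every t-norm satisfies $T_i(u,v)\le\min(u,v)$, since $T_i(u,v)\le T_i(u,1)=u$ and $T_i(u,v)\le T_i(1,v)=v$ by monotonicity and the one identity; transporting this through the increasing map $\phi_i$ yields
\begin{eqnarray*}
a_i\le T_I(x,y)\le \min(x,y)\quad\text{whenever }(x,y)\in[a_i,b_i]^2,
\end{eqnarray*}
so $T_I\le\min$ on all of $[0,1]^2$ and $T_I$ maps each square $[a_i,b_i]^2$ back into its own interval $[a_i,b_i]$. Second, the boundary identities $T_i(u,1)=u$ and $T_i(u,0)=0$ translate, via $\phi_i$, into the statements that at the upper endpoint the value is $T_I(x,b_i)=x$ and at the lower endpoint the value is $T_I(x,a_i)=a_i$; these are exactly the values $\min$ would produce, which is what makes the pieces glue consistently. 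Monotonicity then follows by a short case distinction on whether $(x_1,z)$ and $(x_2,z)$ (for $x_1\le x_2$) lie in the same square, in no square, or in different squares: when both lie in one square the transported monotonicity of $T_i$ applies, when neither does one compares values of $\min$, and the mixed cases are settled by the inequality $T_I\le\min$ together with the endpoint identities just mentioned.

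The main obstacle is associativity, $T_I(T_I(x,y),z)=T_I(x,T_I(y,z))$, which I would prove by cases according to how many of $x,y,z$ share a common interval. If all three lie in the same $[a_i,b_i]$, the confinement property keeps every intermediate value in $[a_i,b_i]$, and after applying $\phi_i^{-1}$ the identity reduces to the associativity of $T_i$. If no two of them share an interval, all the inner and outer evaluations are instances of $\min$, and associativity of $\min$ closes the case. The essential case is when exactly two of the arguments lie in a common interval $[a_i,b_i]$ while the third, say $z$, lies outside it: here I would use $T_I\le\min$ and the fact that $T_I$ of the pair stays in $[a_i,b_i]$ to show that a third element below $a_i$ forces the answer to collapse to $\min$, whereas a third element above $b_i$ simply drops out, so both bracketings yield the same value. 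The one point requiring care throughout is that adjacent closed intervals may share an endpoint $b_i=a_j$; there the two boundary identities $T_i(\cdot,1)=\mathrm{id}$ and $T_j(0,\cdot)=0$ must be checked to agree, and this is precisely what the endpoint computation of the previous paragraph guarantees.
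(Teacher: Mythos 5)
The paper does not actually prove this proposition: it is stated in the preliminaries as a known result imported from \cite{klement}, so there is no in-paper proof to compare yours against. Judged on its own, your argument is correct and is essentially the standard textbook proof: symmetry and the unit element are immediate, the load-bearing facts are $T_I\leq\min$ together with the confinement $T_I([a_i,b_i]^2)\subseteq[a_i,b_i]$ and the boundary identities $T_I(x,b_i)=x$ and $T_I(x,a_i)=a_i$ for $x\in[a_i,b_i]$, and monotonicity and associativity then follow by cases according to which arguments share a square. The only place your write-up is coarser than a complete proof is the associativity case split: besides the configuration where the adjacent pair $x,y$ (or, by symmetry, $y,z$) shares an interval and the third argument lies outside, you must also treat the pair $\{x,z\}$ sharing an interval while $y$ lies outside it, as well as the chain configuration $y=b_i=a_j$ with $x\in[a_i,b_i]$ and $z\in[a_j,b_j]$; both are settled by exactly the tools you already isolated (confinement plus the endpoint identities), so this is a matter of completeness rather than a gap in the method.
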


\begin{proposition}\cite{klement}\label{PSSum}
Let $(S_i)_{i\in I}$ be a family of t-conorms and $(]a_i, b_i[)_{i\in I}$ be a family of nonempty, pairwise disjoint open subintervals of $[0,1]$. 
Then the function $S_I:[0,1]^2 \rightarrow [0,1]$ defined by
\begin{eqnarray}\label{SSum}
S_I(x,y)=\left\{
\begin{array}{ll}
a_i+(b_i-a_i) ~S_i\left(\dfrac{x-a_i}{b_i-a_i}, \dfrac{y-a_i}{b_i-a_i}\right)  & \mbox{if}~  (x, y)\in [a_i, b_i]^2,  \\
\max(x, y) & \mbox{otherwise}.
\end{array}
\right.
\end{eqnarray}
is a t-conorm which is called the ordinal sum of the summands $(a_i, b_i, S_i)_{i\in I}$.\\
\end{proposition}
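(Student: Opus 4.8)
The plan is to verify the four defining axioms of a t-conorm for $S_I$: symmetry, associativity, monotonicity and the zero identity. Symmetry and the zero identity are immediate. The defining formula is symmetric in $x$ and $y$, since the condition $(x,y)\in[a_i,b_i]^2$, the rescaled summand $a_i+(b_i-a_i)S_i(\cdots)$ (by symmetry of each $S_i$), and $\max(x,y)$ are all symmetric. For $S_I(x,0)=x$ I would split into the case where some $a_{i_0}=0$, so that $(x,0)\in[a_{i_0},b_{i_0}]^2$ is possible and the zero identity $S_{i_0}(u,0)=u$ rescales back to $x$, and the case where $0$ lies in no summand interval, so that $S_I(x,0)=\max(x,0)=x$.

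Rather than grind through monotonicity and associativity directly, I would exploit the duality with the t-norm case already established in Proposition~\ref{PTSum}. For each $i$ set $T_i(x,y)=1-S_i(1-x,1-y)$, the $N^S$-dual of $S_i$; since the standard negation $N^S(x)=1-x$ is strict, each $T_i$ is a t-norm. Reflecting the carriers to $]1-b_i,1-a_i[$ (again nonempty, pairwise disjoint, open), let $T_I$ be the ordinal sum of the summands $(1-b_i,1-a_i,T_i)_{i\in I}$, which is a t-norm by Proposition~\ref{PTSum}.

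The crux is then the reflection identity
\begin{eqnarray*}
S_I(x,y)=1-T_I(1-x,1-y),
\end{eqnarray*}
which I would check branch by branch. If $(x,y)\in[a_i,b_i]^2$ then $(1-x,1-y)\in[1-b_i,1-a_i]^2$; using $(1-a_i)-(1-b_i)=b_i-a_i$ and $\frac{(1-x)-(1-b_i)}{b_i-a_i}=1-\frac{x-a_i}{b_i-a_i}$, the definition of $T_i$ as the dual of $S_i$ collapses the rescaled expression back to $a_i+(b_i-a_i)S_i\left(\frac{x-a_i}{b_i-a_i},\frac{y-a_i}{b_i-a_i}\right)$. Off all the squares, $(1-x,1-y)$ lies outside every reflected square, so $T_I(1-x,1-y)=\min(1-x,1-y)=1-\max(x,y)$, matching the $\max$ branch. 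Once the identity holds, $S_I$ equals the $N^S$-dual $(T_I)_{N^S}$ of the t-norm $T_I$ (recall $(N^S)^{-1}=N^S$), hence $S_I$ is a t-conorm by \cite[Theorem~3.2]{weber}, and associativity and monotonicity come for free.

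If a self-contained argument is preferred instead, the only subtle axioms are monotonicity and associativity, and the key structural fact making both work is boundary matching: on the edge $x=a_i$ of a square the rescaled summand reduces to $a_i+(b_i-a_i)\cdot\frac{y-a_i}{b_i-a_i}=y=\max(a_i,y)$, so the two branches agree there and $S_I$ maps $[a_i,b_i]^2$ into $[a_i,b_i]$. Monotonicity then follows from a case analysis on the position of the arguments relative to the (disjoint) intervals. Associativity is the main obstacle: I would use pairwise disjointness to argue that at most one summand is active for any triple, so that the nested evaluations either remain inside a single square, where associativity of $S_i$ applies after rescaling, or reduce to iterated maxima, where associativity of $\max$ applies, with the boundary-matching property ensuring the two regimes glue consistently.
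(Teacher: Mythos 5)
Your main argument is correct, and it is worth noting that the paper itself gives no proof of this proposition at all: it is quoted as a known result from \cite{klement}, so there is no internal proof to compare against. Your duality route is essentially the standard derivation of the t-conorm case from the t-norm case, and it is fully rigorous with the tools the paper already provides: each $T_i=(S_i)_{N^S}$ is a t-norm by \cite[Theorem 3.2]{weber}, the reflected family $(]1-b_i,1-a_i[)_{i\in I}$ is again nonempty, pairwise disjoint and open because $u\mapsto 1-u$ is an order-reversing bijection of $[0,1]$, and your branch-by-branch verification of $S_I(x,y)=1-T_I(1-x,1-y)$ is sound --- the key point, which you use implicitly and correctly, is that $(x,y)\in[a_i,b_i]^2$ if and only if $(1-x,1-y)\in[1-b_i,1-a_i]^2$, so the two case distinctions match up exactly. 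From the identity $S_I=(T_I)_{N^S}$, Weber's theorem delivers all four axioms at once, which is precisely what the duality buys: no separate treatment of monotonicity or associativity. Two small caveats. First, in your opening paragraph the zero identity deserves the observation that at most one interval can have $a_i=0$ (two such intervals would have overlapping interiors), so the case split is exhaustive and unambiguous; this is easy but should be said. Second, in your optional self-contained sketch, the claim that ``at most one summand is active for any triple'' is not literally true --- for $x,y\in[a_1,b_1]$ and $z\in[a_2,b_2]$ the inner evaluation of $S_I(S_I(x,y),z)$ genuinely uses summand $1$ before the outer one collapses to $\max$ --- and a direct associativity proof requires a more careful case analysis than you indicate. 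Since that sketch is presented only as an alternative, it does not affect the validity of your primary argument.
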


Nevertheless, for fuzzy implications there are several proposal of ordinal sums. For example,\\


%
\begin{proposition}\cite[Theorem 7]{DK161}\label{theo-JI-RS}
Let $(J_i)_{i\in I}$ be a family of implications and $(a_i, b_i)_{i\in I}$ be a family of nonempty  pairwise disjoint open 
subintervals of 
$[0, 1]$  such that $a_i>0$ for each $i\in I$. Then the function $J_I: [0, 1]^2 \rightarrow [0, 1]$ given by
\begin{eqnarray}
J_I(x,y)=\left\{
\begin{array}{ll}
a_i+(b_i-a_i) ~J_i\left(\dfrac{x-a_i}{b_i-a_i}, \dfrac{y-a_i}{b_i-a_i}\right) & \mbox{if}~  x, y\in [a_i, b_i], \\
J_{RS} (x, y) & \mbox{otherwise}.
\end{array}
\right.
\end{eqnarray}
is an implication which is called the ordinal sum of the summands $(a_i, b_i, J_i)_{i\in I}$.\\
\end{proposition}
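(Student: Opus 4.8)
The plan is to verify the five defining conditions J1--J5 of a fuzzy implication for the function $J_I$. The boundary conditions J3--J5 are immediate, while the monotonicity conditions J1 and J2 form the core; these I would obtain by combining the monotonicity of each summand $J_i$ with a careful gluing analysis along the boundaries of the squares $[a_i,b_i]^2$.

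First I would dispatch J3, J4 and J5. Because $a_i>0$ for every $i\in I$, we have $0\notin[a_i,b_i]$, so neither $(0,0)$ nor $(1,0)$ lies in any square $[a_i,b_i]^2$; both fall into the ``otherwise'' branch, giving $J_I(0,0)=J_{RS}(0,0)=1$ and $J_I(1,0)=J_{RS}(1,0)=0$, which are J3 and J5. For J4, if $(1,1)$ lies outside every square then $J_I(1,1)=J_{RS}(1,1)=1$, whereas if $1=b_i$ for some $i$ then $(1,1)\in[a_i,b_i]^2$ and $J_I(1,1)=a_i+(b_i-a_i)J_i(1,1)=b_i=1$ by J4 applied to $J_i$.

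For J1 and J2 I would first record two gluing identities valid for each summand: since $J_i$ is non-increasing in the first argument, non-decreasing in the second, and $J_i(0,0)=J_i(1,1)=1$, monotonicity forces $J_i(0,t)=1$ and $J_i(t,1)=1$ for all $t\in[0,1]$. I would also use that on a square $J_I(x,y)=\psi_i(J_i(\phi_i(x),\phi_i(y)))$, where $\phi_i(u)=\frac{u-a_i}{b_i-a_i}$ and $\psi_i(t)=a_i+(b_i-a_i)t$ are increasing and $\psi_i$ maps $[0,1]$ onto $[a_i,b_i]$; hence the restriction of $J_I$ to a square is non-increasing in $x$ and non-decreasing in $y$. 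To prove J1, fix $y$ and distinguish two cases. If $y\notin[a_j,b_j]$ for all $j$, then $(x,y)$ lies in no square and $J_I(\cdot,y)=J_{RS}(\cdot,y)$ is non-increasing. If $y\in\,]a_i,b_i[$, then pairwise disjointness of the open intervals gives $y\notin[a_j,b_j]$ for $j\neq i$, so as $x$ increases the value starts at $J_{RS}(x,y)=1$ for $x<a_i$ (since $x<y$), drops to $b_i\le 1$ on entering the square at $x=a_i$ (using $J_i(0,\cdot)=1$), decreases within the square, and finally drops to $J_{RS}(x,y)=0$ for $x>b_i$ (since $x>y$), which does not exceed the interior value $\ge a_i$ at $x=b_i$; thus $J_I(\cdot,y)$ is non-increasing. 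The proof of J2 is symmetric, fixing $x$ and letting $y$ increase, with the roles of $J_i(0,\cdot)=1$ and $J_i(\cdot,1)=1$ interchanged.

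The step I expect to be the main obstacle is exactly this boundary bookkeeping: one must check that at every passage between a square and the Rescher region the value changes in the admissible direction, which is where the identities $J_i(0,t)=1$, $J_i(t,1)=1$, the inclusion of interior values in $[a_i,b_i]\subseteq[0,1]$, and the disjointness of the open intervals are all used together. The genuinely delicate configuration is when the fixed variable equals an endpoint shared by two adjacent summands ($b_i=a_j$): since for implications the two interior formulas need not coincide there (one gives $b_i$, the other $b_j$), this case requires the closures of the intervals to be taken disjoint, or a fixed tie-breaking convention, for $J_I$ to be well defined; once well-definedness is secured, the same one-sided comparisons against the relevant $J_{RS}$ value close the argument.
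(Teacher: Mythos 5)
The paper does not actually prove this proposition: it is imported verbatim from \cite{DK161} (Theorem 7) with no internal proof, so there is nothing to compare your argument against and it must stand on its own. Its skeleton is the natural one and is sound where it applies: J3--J5 follow exactly as you say (the hypothesis $a_i>0$ keeps $(0,0)$ and $(1,0)$ out of every square, and $J_i(1,1)=1$ handles a square touching $1$); the gluing identities $J_i(0,t)=1$ and $J_i(t,1)=1$ are correct consequences of J2+J3 and J1+J4 respectively; and your one-sided comparisons correctly settle the two cases you consider for monotonicity, namely the fixed variable lying outside every closed interval $[a_j,b_j]$, or in the interior $]a_i,b_i[$ of exactly one of them (your appeal to disjointness there is legitimate: an endpoint of one interval cannot lie in the interior of another without forcing the open intervals to overlap).

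The gap is precisely in the boundary bookkeeping you flag at the end. First, your two cases are not exhaustive: the fixed variable can equal an endpoint $a_i$ or $b_i$. When that endpoint is not shared with another interval, the same comparisons close the case, so this is only an omission. But for a shared endpoint $b_i=a_j$ your diagnosis --- that the trouble is mere well-definedness, curable by a tie-breaking convention --- is wrong: monotonicity J1 fails at points where the formula is unambiguous, so no convention can help. Take $J_i=J_j=J_{KD}$ on $]a_i,b_i[\,=\,]1/4,1/2[$ and $]a_j,b_j[\,=\,]1/2,3/4[$. The point $(0.45,0.5)$ lies only in $[1/4,1/2]^2$, giving $J_I(0.45,0.5)=\tfrac14+\tfrac14 J_{KD}(0.8,1)=\tfrac12$, while $(0.55,0.5)$ lies only in $[1/2,3/4]^2$, giving $J_I(0.55,0.5)=\tfrac12+\tfrac14 J_{KD}(0.2,0)=0.7$; thus $J_I(\cdot,0.5)$ strictly increases across the junction, violating J1 independently of any value assigned at the corner $(0.5,0.5)$. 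Hence the only viable repair is your first alternative: the closed intervals $[a_i,b_i]$ must be pairwise disjoint, which is genuinely stronger than the stated hypothesis that the open intervals are disjoint. In other words, you have in fact uncovered that the proposition as transcribed fails when endpoints coincide, but you misidentified the failure as one of well-definedness only; with the strengthened hypothesis and the endpoint cases added, your argument is complete.
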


Other proposal of ordinal sums for fuzzy implications can be found, for example, in \cite{B17,DK16,DK161,DK17,Su15}.

\section{Ordinal sums of fuzzy negations}

In this section, we will use the definition of ordinal sums of fuzzy negations  introduced in \cite{BSJIFS}, to show some results involving equilibrium point, relations between some classes of fuzzy negations and that ordinal sum of a fuzzy negation family is a fuzzy negation.\\

\begin{definition}\cite[Definition 3.1]{BSJIFS}\label{def-ODN}
 Let $(N_i)_{i\in I}$ be a family of fuzzy negations and $(]a_i, b_i[)_{i\in I}$ be a family of nonempty, pairwise disjoint open 
subintervals of $[0,1]$. Then the function $N_I:[0,1] \rightarrow [0,1]$ defined by
\begin{eqnarray}\label{NSum}
N_I(x)=\left\{
\begin{array}{ll}
(1-b_i)+(b_i-a_i) ~N_i\left(\dfrac{x-a_i}{b_i-a_i}\right) & \mbox{if}~  x\in [a_i, b_i],  \\
N^S(x) & \mbox{otherwise}.
\end{array}
\right.
\end{eqnarray}
is  called of the ordinal sum of the summands $(a_i, b_i, N_i)_{i\in I}$.\\
\end{definition}

\begin{lemma}\cite[Lemma 3.1]{BSJIFS}\label{LNSum}
Let $(]a_i, b_i[)_{i\in I}$ be a family of nonempty, pairwise disjoint open 
subintervals of $[0,1]$, $(N_i)_{i\in I}$ be a family of fuzzy negations and $N_I$ the ordinal sum $N_I$ of the summands $(a_i, b_i, N_i)_{i\in I}$.
Then,

1) If $x\in [a_i,b_i]$ for some $i\in I$, then $N_I(x)\in [1-b_i,1-a_i]$;

2) If $x\not\in \bigcup_{i\in I} [a_i,b_i]$, then $N_I(x)\not\in \bigcup_{i\in I} [1-b_i,1-a_i]$.\\
\end{lemma}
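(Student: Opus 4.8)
The plan is to treat the two branches of the piecewise definition \eqref{NSum} separately, since each assertion concerns exactly one branch.

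For part 1), I would start from the hypothesis $x\in[a_i,b_i]$, which puts $x$ in the first branch, so that $N_I(x)=(1-b_i)+(b_i-a_i)\,N_i\!\left(\frac{x-a_i}{b_i-a_i}\right)$. The key observation is that the argument $t:=\frac{x-a_i}{b_i-a_i}$ lies in $[0,1]$: since the interval $]a_i,b_i[$ is nonempty we have $b_i-a_i>0$, and $a_i\le x\le b_i$ forces $0\le t\le 1$. Because $N_i$ is a fuzzy negation, N1 and N2 give $N_i(t)\in[0,1]$, so the increasing affine map $u\mapsto(1-b_i)+(b_i-a_i)\,u$ carries $N_i(t)$ into $\bigl[(1-b_i)+0,\;(1-b_i)+(b_i-a_i)\bigr]=[1-b_i,\,1-a_i]$. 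This yields $N_I(x)\in[1-b_i,1-a_i]$ at once.

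For part 2), the hypothesis $x\notin\bigcup_{i\in I}[a_i,b_i]$ puts $x$ in the second branch, so $N_I(x)=N^S(x)=1-x$. I would then argue by contradiction: if $N_I(x)=1-x$ belonged to $[1-b_j,1-a_j]$ for some $j\in I$, then $1-b_j\le 1-x\le 1-a_j$, and multiplying through by $-1$ (which reverses the inequalities) gives $a_j\le x\le b_j$, i.e. $x\in[a_j,b_j]\subseteq\bigcup_{i\in I}[a_i,b_i]$, contradicting the hypothesis. Hence $N_I(x)\notin\bigcup_{i\in I}[1-b_i,1-a_i]$, as claimed.

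Both arguments are a mechanical unpacking of \eqref{NSum} together with the order-reversing nature of the map $x\mapsto 1-x$, so I do not anticipate a genuine obstacle. The one point that deserves care is well-definedness at shared endpoints: only the open intervals $]a_i,b_i[$ are assumed pairwise disjoint, so distinct closed intervals may meet at a common endpoint, and for part 1) one should verify that the two branches agree there. Indeed, at $x=b_i$ one has $t=1$ and $N_i(1)=0$, giving $N_I(x)=1-b_i=1-x$; and at $x=a_i$ one has $t=0$ and $N_i(0)=1$, giving $N_I(x)=(1-b_i)+(b_i-a_i)=1-a_i=1-x$. Thus the formula is consistent with $N^S$ on the boundaries, and the conclusion $N_I(x)\in[1-b_i,1-a_i]$ still holds at the endpoints, realized precisely as the boundary values $1-b_i$ and $1-a_i$.
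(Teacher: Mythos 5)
Your proof is correct: part 1) follows from the fact that the affine map $u\mapsto(1-b_i)+(b_i-a_i)u$ sends $[0,1]$ onto $[1-b_i,1-a_i]$, and part 2) follows by the contrapositive argument with $N^S(x)=1-x$, exactly as you argue. Note that the paper does not reproduce a proof of this lemma at all (it is quoted from Lemma 3.1 of the cited reference), so there is nothing to diverge from; your argument is the natural unpacking of Eq.~(\ref{NSum}), and your additional check that the two branches agree at shared endpoints (where $N_i(0)=1$ and $N_i(1)=0$ force both branches to equal $1-x$) is a welcome point of care about well-definedness that the statement itself glosses over.
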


\begin{proposition}\cite[Proposition 3.1]{BSJIFS}\label{PNSum-old}
Let $(]a_i, b_i[)_{i\in I}$ be a family of nonempty, pairwise disjoint open subintervals of $[0,1]$ and $(N_i)_{i\in I}$ be a family of fuzzy negations.
Then the ordinal sum $N_I$ of the summands $(a_i, b_i, N_i)_{i\in I}$ is a fuzzy negation.\\
\end{proposition}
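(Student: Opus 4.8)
The plan is to verify the two defining axioms N1 and N2 of a fuzzy negation for the function $N_I$ given in \eqref{NSum}. Before doing so I would check that $N_I$ is well defined, a point that needs attention because the \emph{open} intervals $]a_i,b_i[$ are pairwise disjoint while the \emph{closed} intervals $[a_i,b_i]$ may still share endpoints. Evaluating the upper branch of \eqref{NSum} at the endpoints and using $N_i(0)=1$, $N_i(1)=0$ gives $N_I(a_i)=(1-b_i)+(b_i-a_i)=1-a_i=N^S(a_i)$ and $N_I(b_i)=1-b_i=N^S(b_i)$. Hence on every $[a_i,b_i]$ the interval branch agrees with the standard negation at the two endpoints, so the two branches never assign conflicting values and $N_I$ is well defined.

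For N1, if $0\in[a_i,b_i]$ for some $i$ then necessarily $a_i=0$ and the computation above yields $N_I(0)=1-a_i=1$; otherwise $N_I(0)=N^S(0)=1$. Dually, if $1\in[a_i,b_i]$ then $b_i=1$ and $N_I(1)=1-b_i=0$, and otherwise $N_I(1)=N^S(1)=0$. Thus $N_I(0)=1$ and $N_I(1)=0$. For N2 I would first dispatch the intra-piece comparisons of $N_I(x)\ge N_I(y)$ for $x\le y$. If $x$ and $y$ both lie outside every open interval, then $N_I(x)=1-x\ge 1-y=N_I(y)$. If $x,y\in[a_i,b_i]$ for the same $i$, then on this interval $N_I$ is the composition of the increasing map $t\mapsto(1-b_i)+(b_i-a_i)t$ with $N_i$ and the increasing map $u\mapsto(u-a_i)/(b_i-a_i)$; since $N_i$ is non-increasing, so is $N_I$ restricted to $[a_i,b_i]$.

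The substantive inter-piece cases are where I would invoke Lemma \ref{LNSum}. When $x\in[a_i,b_i]$ and $y\in[a_j,b_j]$ with $i\neq j$, disjointness of the open intervals together with $x\le y$ forces the ordering $b_i\le a_j$ (up to the trivial coincidence in which the two closed intervals meet at a single shared point, where $x=y$ and the claim is immediate); Lemma \ref{LNSum}(1) then gives $N_I(x)\ge 1-b_i\ge 1-a_j\ge N_I(y)$. The mixed cases, in which exactly one argument lies in a summand interval, are handled the same way: if $x\in[a_i,b_i]$ and $y$ lies outside all open intervals with $x\le y$, then $y\ge b_i$, whence $N_I(x)\ge 1-b_i\ge 1-y=N_I(y)$, and symmetrically if $x$ lies outside and $y\in[a_j,b_j]$ then $x\le a_j$, whence $N_I(x)=1-x\ge 1-a_j\ge N_I(y)$.

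The main obstacle is exactly this inter-piece monotonicity: within a single branch the claim is immediate, but to compare values of $N_I$ across different summands, or across a summand and the standard-negation region, one must know that the images of the pieces are stacked in the correct decreasing order. This is precisely the content of Lemma \ref{LNSum}, which confines $N_I([a_i,b_i])$ to $[1-b_i,1-a_i]$ and keeps the complementary values out of those image intervals. Combined with the ordering $b_i\le a_j$ coming from disjointness, it closes every cross-boundary comparison and completes the verification of N2.
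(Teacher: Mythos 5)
Your proof is correct, and it is worth noting that the paper itself contains no proof of this proposition to compare against: the result is imported verbatim from \cite[Proposition 3.1]{BSJIFS}, and the paper simply cites it. Judged on its own merits, your argument is complete and sound. The three ingredients you assemble are exactly what the statement requires: (i) the well-definedness check at shared endpoints --- a real issue, since the closed intervals $[a_i,b_i]$ may touch even though the open ones are disjoint, and your computation $N_I(a_i)=1-a_i$, $N_I(b_i)=1-b_i$ shows all branches agree there; (ii) the verification of N1 in both the ``$0$ or $1$ inside a summand'' and ``outside all summands'' situations; and (iii) the case analysis for N2, where the intra-piece cases are immediate and the cross-piece cases are closed by Lemma \ref{LNSum} (itself only cited in the paper, but legitimately available) together with the ordering $b_i\le a_j$ forced by disjointness. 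Your handling of the degenerate configuration $x=y=b_j=a_i$, where two closed intervals meet at a point, is also correct and is a detail that a careless write-up would miss.
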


If $(N_i)_{i\in I}$ is a family of fuzzy negations such that $(]N_i(b_i),N_i( a_i)[)_{i\in I}$ is also a family of nonempty, 
pairwise disjoint open subintervals of $[0,1]$, then the ordinal sum of $(T_i)_{i\in I}$ and  $(S_i)_{i\in I}$ with respect to 
$(]N_i(b_i),N_i( a_i)[)_{i\in I}$ will be denoted by $T_I^N$ and $S_I^N$, respectively.\\

\begin{proposition}
Let $(]a_i, b_i[)_{i\in I}$ be a family of nonempty, pairwise disjoint open subintervals of $[0,1]$, $(N_i)_{i\in I}$ be a family of fuzzy negations 
and $N_I$ be the ordinal sum of the summands $(a_i, b_i, N_i)_{i\in I}$.  If, for some $i\in I$, $N_i$ has an equilibrium point $e_i$ and $b_i=1-a_i$,
then $a_i+(b_i-a_i)e_i$ is the equibibrium point of $N_I$.
\end{proposition}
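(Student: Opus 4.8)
The plan is to verify directly that the candidate value $e := a_i + (b_i-a_i)e_i$ satisfies $N_I(e)=e$. First I would note that, because $e_i\in[0,1]$, the point $e$ is a convex combination of $a_i$ and $b_i$ and therefore lies in $[a_i,b_i]$. This is the crucial preliminary step: it guarantees that the evaluation of $N_I$ at $e$ is governed by the first (rescaled) branch of the defining formula~\eqref{NSum} associated with the summand $(a_i,b_i,N_i)$, and not by the standard-negation branch.

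Second, I would substitute $x=e$ into that branch. The normalized argument $\frac{x-a_i}{b_i-a_i}$ evaluated at $x=e$ collapses to exactly $e_i$, since subtracting $a_i$ and dividing by $b_i-a_i$ inverts the affine map that produced $e$. Applying the equilibrium hypothesis $N_i(e_i)=e_i$ then gives $N_I(e)=(1-b_i)+(b_i-a_i)e_i$.

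Third, I would compare this with the target $e=a_i+(b_i-a_i)e_i$. The two expressions carry the identical coefficient $(b_i-a_i)e_i$, so they coincide if and only if the constant terms match, that is $1-b_i=a_i$. This is precisely the hypothesis $b_i=1-a_i$, whence $N_I(e)=e$. It is worth emphasizing that the symmetry assumption $b_i=1-a_i$ is not auxiliary but is exactly the condition forcing the constant $1-b_i$ introduced by the ordinal-sum scaling to agree with the lower endpoint $a_i$ appearing in the affine image of the equilibrium point; without it the fixed point of $N_i$ would not transfer to a fixed point of $N_I$.

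Finally, since $N_I$ is a fuzzy negation by Proposition~\ref{PNSum-old} and any fuzzy negation, being non-increasing, admits at most one equilibrium point (if $N(e)=e$ and $N(e')=e'$ with $e<e'$, then monotonicity forces $e=N(e)\ge N(e')=e'$, a contradiction), the value $e$ is indeed \emph{the} equilibrium point of $N_I$. I do not anticipate a genuine obstacle: the argument reduces to a short affine computation, and the only point demanding care is the very first step, namely confirming that $e$ lands inside $[a_i,b_i]$ so that the rescaled branch is the operative one.
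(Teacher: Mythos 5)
Your proof is correct and follows essentially the same route as the paper's: place $e=a_i+(b_i-a_i)e_i$ in $[a_i,b_i]$, evaluate the rescaled branch of Eq.~(\ref{NSum}), and use $b_i=1-a_i$ to turn $(1-b_i)+(b_i-a_i)e_i$ into $e$. Your closing observation that a non-increasing fuzzy negation has at most one fixed point is a small, correct addition that actually justifies the article's use of ``the'' equilibrium point, which the paper's own proof leaves implicit.
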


\begin{proof}
Suppose that $N_i(e_i)=e_i$, for all $i=1, \ldots, n$. Then, $x_i=a_i+(b_i-a_i)e_i\in [a_i,b_i]$ and therefore
 $N_I(x_i) = (1-b_i)+(b_i-a_i) ~N_i\left(\dfrac{x_i-a_i}{b_i-a_i}\right)$. Since, $\dfrac{x_i-a_i}{b_i-a_i}=e_i$ and $a_i=1-b_i$, then 
\begin{eqnarray*}
N_I\left(a_i+(b_i-a_i)e_i\right) & = & 1-b_i+(b_i-a_i) ~N_i\left(e_i\right)\\
                                                & = & 1-b_i+(b_i-a_i) e_i  \\   
                                                   & = & a_i+(b_i-a_i) e_i 
\end{eqnarray*}
Therefore,  $a_i+(b_i-a_i)e_i$ is the equilibrium point of $N_I$.
\end{proof}\\

\begin{proposition}\label{Ni<NS}
Let $(]a_i, b_i[)_{i\in I}$ be a family of nonempty, pairwise disjoint open subintervals of $[0,1]$, $(N_i)_{i\in I}$ be a family of fuzzy negations
and $N_I$ be the ordinal sum of the summands $(a_i, b_i, N_i)_{i\in I}$.  Then  $N_I\leq N^S$ if and only if  $N_i\leq N^S$ for all $i\in I$.
\end{proposition}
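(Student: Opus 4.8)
The plan is to reduce the global inequality $N_I \leq N^S$ on $[0,1]$ to the local inequalities $N_i \leq N^S$ on $[0,1]$, one interval at a time. The region outside $\bigcup_{i\in I}[a_i,b_i]$ requires no work: there $N_I(x)=N^S(x)$ by definition, so the inequality holds with equality and contributes to neither implication. Hence everything reduces to understanding the behaviour of $N_I$ on each $[a_i,b_i]$.

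On a fixed $[a_i,b_i]$ I would introduce the affine change of variable $t=\dfrac{x-a_i}{b_i-a_i}$, which (since $a_i<b_i$) is a bijection from $[a_i,b_i]$ onto $[0,1]$, so that $x=a_i+(b_i-a_i)t$ and $N^S(x)=1-x=(1-a_i)-(b_i-a_i)t$. Substituting the definition of $N_I$ gives $N_I(x)=(1-b_i)+(b_i-a_i)N_i(t)$, and after simplification (using $(1-a_i)-(1-b_i)=b_i-a_i$) the inequality $N_I(x)\leq N^S(x)$ becomes $(b_i-a_i)N_i(t)\leq(b_i-a_i)(1-t)$. Dividing by $b_i-a_i>0$, this is exactly $N_i(t)\leq 1-t=N^S(t)$. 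Thus, for each $i$, the inequality $N_I(x)\leq N^S(x)$ holds for all $x\in[a_i,b_i]$ if and only if $N_i(t)\leq N^S(t)$ holds for all $t\in[0,1]$.

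With this equivalence in hand both directions are immediate. For the \emph{if} direction, assuming $N_i\leq N^S$ for every $i$, the equivalence yields $N_I\leq N^S$ on each $[a_i,b_i]$, while outside the intervals $N_I=N^S$; hence $N_I\leq N^S$ on all of $[0,1]$. For the \emph{only if} direction, fixing any $i$ and any $t\in[0,1]$, the point $x=a_i+(b_i-a_i)t$ lies in $[a_i,b_i]$, and applying the hypothesis $N_I\leq N^S$ at this $x$ gives $N_i(t)\leq N^S(t)$ by the same equivalence; as $i$ and $t$ are arbitrary, $N_i\leq N^S$ for all $i$.

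I do not expect a genuine obstacle here: the entire content is the single affine computation above, and the only points needing care are bookkeeping. First, one must check that $t=\dfrac{x-a_i}{b_i-a_i}$ sweeps all of $[0,1]$ as $x$ sweeps $[a_i,b_i]$, so that no value of $N_i$ is missed in the \emph{only if} direction. Second, one should confirm consistency at the endpoints $a_i,b_i$: there $N_i(0)=1$ and $N_i(1)=0$ force $N_I$ to coincide with $N^S$ regardless of which branch of Definition~\ref{def-ODN} is invoked, so the function is well defined and the endpoint cases cause no trouble.
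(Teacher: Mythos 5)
Your proposal is correct and takes essentially the same approach as the paper: the paper's ($\Rightarrow$) direction uses the same affine substitution $x_i=a_i+(b_i-a_i)x$ to pull the hypothesis back to $N_i$, and its ($\Leftarrow$) direction performs the same algebraic simplification inside each $[a_i,b_i]$, only written as two separate computations rather than packaged as your single two-sided local equivalence.
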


\begin{proof}
($\Rightarrow$) Let $i\in I$ and $x\in [0,1]$. Then $x_i=a_i+(b_i-a_i)x\in [a_i,b_i]$. So,

\begin{eqnarray*}
 N_i(x) & = & N_i\left(\frac{x_i-a_i}{b_i-a_i}\right) \\
 & = & \frac{N_I(x_i)-(1-b_i)}{b_i-a_i} \\
 & \leq & \frac{N^S(x_i)-(1-b_i)}{b_i-a_i} \\
 & = & \frac{b_i-x_i}{b_i-a_i} \\
  & = & 1-x = N^S(x)
\end{eqnarray*}

($\Leftarrow$)
Suppose that $N_i\leq N^S$ for all $i\in I$. Then
\begin{eqnarray*}
N_I(x) & = & \left\{
\begin{array}{ll}
(1-b_i)+(b_i-a_i) ~N_i\left(\dfrac{x-a_i}{b_i-a_i}\right) & \mbox{if}~  x\in [a_i, b_i],  \\
N^S(x) & \mbox{otherwise}.
\end{array}
\right.\\
& \leq & \left\{
\begin{array}{ll}
(1-b_i)+(b_i-a_i) ~N^S\left(\dfrac{x-a_i}{b_i-a_i}\right) & \mbox{if}~  x\in [a_i, b_i],  \\
N^S(x) & \mbox{otherwise}.
\end{array}
\right.\\
& = & N^S(x).
\end{eqnarray*} 
\end{proof}\\

\begin{proposition}
Let $(]a_i, b_i[)_{i\in I}$ be a family of nonempty, pairwise disjoint open subintervals of $[0,1]$, $(N_i)_{i\in I}$ be a family of fuzzy negations 
and $N_I$ be the ordinal sum $N_I$ be the summands $(a_i, b_i, N_i)_{i\in I}$. Then,   $N_I\geq N^S$ if and only if  $N_i\geq N^S$ for all $i\in I$.
\end{proposition}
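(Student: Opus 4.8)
The plan is to mirror the argument of Proposition \ref{Ni<NS}, reversing every inequality, since the present statement is its exact dual. The crucial algebraic fact, which I would establish first, is that the affine rescaling used in the ordinal sum is compatible with the standard negation: for any $i\in I$ and any $x\in[0,1]$, writing $x_i=a_i+(b_i-a_i)x\in[a_i,b_i]$, one has
\begin{eqnarray*}
\frac{N^S(x_i)-(1-b_i)}{b_i-a_i}=\frac{(1-a_i)-(b_i-a_i)x-(1-b_i)}{b_i-a_i}=1-x=N^S(x).
\end{eqnarray*}
Equivalently, $N^S(x_i)=(1-b_i)+(b_i-a_i)N^S(x)$, so applying $N^S$ on the scaled variable and then rescaling reproduces $N^S$ on the original block.

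For the forward direction I would fix $i\in I$ and an arbitrary $x\in[0,1]$, set $x_i=a_i+(b_i-a_i)x$, and use the definition of $N_I$ on $[a_i,b_i]$ to solve for $N_i$, obtaining $N_i(x)=\bigl(N_I(x_i)-(1-b_i)\bigr)/(b_i-a_i)$. Since $b_i-a_i>0$, the hypothesis $N_I\geq N^S$ yields $N_i(x)\geq\bigl(N^S(x_i)-(1-b_i)\bigr)/(b_i-a_i)=N^S(x)$ by the identity above; as $x$ and $i$ were arbitrary, this gives $N_i\geq N^S$ for all $i\in I$.

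For the converse I would assume $N_i\geq N^S$ for all $i\in I$ and split on the two branches of the definition of $N_I$. On each block $x\in[a_i,b_i]$, monotonicity of the affine map $t\mapsto(1-b_i)+(b_i-a_i)t$ (again using $b_i-a_i>0$) together with $N_i\geq N^S$ gives $N_I(x)\geq(1-b_i)+(b_i-a_i)N^S\bigl((x-a_i)/(b_i-a_i)\bigr)=N^S(x)$; outside the blocks $N_I(x)=N^S(x)$, so the inequality holds trivially.

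I do not expect any genuine obstacle here: the only point requiring attention is the sign of the scaling factor $b_i-a_i$, which is strictly positive because the intervals $]a_i,b_i[$ are nonempty, so every division and multiplication preserves the direction of the inequalities throughout the argument.
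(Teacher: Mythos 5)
Your proposal is correct and takes essentially the same approach as the paper: the paper's entire proof reads ``Analogous from Proposition \ref{Ni<NS},'' and your argument is exactly that dualization, reversing each inequality in the proof of Proposition \ref{Ni<NS} and resting on the compatibility identity $N^S\bigl(a_i+(b_i-a_i)x\bigr)=(1-b_i)+(b_i-a_i)N^S(x)$ together with the positivity of $b_i-a_i$. Nothing is missing.
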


\begin{proof}
Analogous from Proposition \ref{Ni<NS}.
\end{proof}\\

\section{Ordinal sums of fuzzy negations and classes of fuzzy negations}

In this section, we will prove some propositions and theorems using definitions and results introduced in the previous sections. We will establish conditions for the ordinal sum of a family of fuzzy negations resulting in a fuzzy negation belonging to a class of fuzzy negations such as strict, strong, frontier, continuous and invertible. \\

\begin{proposition}\label{PNSum-new}
Let $(]a_i, b_i[)_{i\in I}$ be a family of nonempty, pairwise disjoint open 
subintervals of $[0,1]$ and $(N_i)_{i\in I}$ be a family of functions and $N_I$ the function obtained as in Eq. (\ref{NSum}).
All the $N_i$'s are (continuous, strictly decreasing) fuzzy negations if and only if then $N_I$ is a (continuous, strictly decreasing) 
fuzzy negation such that $N_I(a_i)=1-a_i$ and  $N_I(b_i)=1-b_i$ for each $i\in I$.
\end{proposition}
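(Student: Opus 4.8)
The plan is to treat the statement as three parallel biconditionals sharing one skeleton: one for the bare property ``fuzzy negation,'' and two refinements adding ``continuous'' and ``strictly decreasing.'' The keystone of the whole argument is the elementary computation of the endpoint values. Substituting $x=a_i$ and $x=b_i$ into Eq. (\ref{NSum}) gives
\[
N_I(a_i) = (1-b_i)+(b_i-a_i)\,N_i(0), \qquad N_I(b_i) = (1-b_i)+(b_i-a_i)\,N_i(1).
\]
Whenever the $N_i$ are fuzzy negations (so $N_i(0)=1$, $N_i(1)=0$) these collapse to $N_I(a_i)=1-a_i$ and $N_I(b_i)=1-b_i$, discharging the boundary claim in the forward direction; conversely these same identities are exactly what is invertible in the backward direction. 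I would record this computation first. For the forward direction of the bare case I would simply invoke Proposition \ref{PNSum-old}, which already gives that $N_I$ is a fuzzy negation.

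For the forward direction with continuity I would argue piecewise. On each $[a_i,b_i]$ the map $N_I$ is an affine image of $N_i$ precomposed with the rescaling $x\mapsto (x-a_i)/(b_i-a_i)$, hence continuous when $N_i$ is; on the complement of $\bigcup_{i\in I}[a_i,b_i]$ it coincides with the continuous $N^S$. Continuity at an endpoint $a_i$ (resp. $b_i$) then follows because the one-sided limit from inside the interval equals $1-a_i$ (resp. $1-b_i$) by the boundary computation, matching $N^S$ on the other side. For strict monotonicity I would fix $x<y$ and run a short case analysis: both in one summand interval (use strict decrease of $N_i$ and $b_i-a_i>0$); both outside (use $N^S$); one inside and one outside; and in two distinct intervals. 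In the mixed and two-interval cases I would compare values using Lemma \ref{LNSum}, which pins $N_I$ on $[a_i,b_i]$ inside $[1-b_i,1-a_i]$, together with the within-interval strictness and the ordering $b_i\le a_j$ forced by disjointness; the one delicate subcase, adjacent intervals with $b_i=a_j$, is settled by noting that $x<b_i$ or $y>a_j$ must hold.

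The hard part will be continuity in the forward direction when $I$ is infinite and the endpoints of the $]a_i,b_i[$ accumulate, so that a limit point $x_0$ need not lie in any single summand. I would control this with the uniform estimate that, on $[a_i,b_i]$, both $N_I(x)$ and $N^S(x)$ lie in $[1-b_i,1-a_i]$, whence $|N_I(x)-N^S(x)|\le b_i-a_i$. Since pairwise disjoint open subintervals accumulating at $x_0$ must have lengths tending to $0$, $N_I$ is squeezed onto the continuous $N^S$ near $x_0$, giving continuity there. This is the step I expect to require the most care.

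For the backward direction I would solve the defining relation on each summand interval. Assuming $N_I$ is a (continuous, strictly decreasing) fuzzy negation with the stated boundary values, the first branch of Eq. (\ref{NSum}) yields, for $t\in[0,1]$,
\[
N_i(t) = \frac{N_I\big(a_i+(b_i-a_i)t\big)-(1-b_i)}{b_i-a_i}.
\]
Then N1 holds precisely because $N_I(a_i)=1-a_i$ and $N_I(b_i)=1-b_i$ force $N_i(0)=1$ and $N_i(1)=0$; this is where the boundary hypothesis is indispensable. Property N2, and strict decrease when assumed, follow because $t\mapsto a_i+(b_i-a_i)t$ is increasing, $N_I$ is non-increasing (resp. strictly decreasing), and $b_i-a_i>0$; continuity is inherited from $N_I$. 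Hence each $N_i$ is a fuzzy negation of the required class, completing the equivalence.
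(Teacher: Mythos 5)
Your proposal is correct, and its skeleton coincides with the paper's proof: compute the endpoint values $N_I(a_i)=1-a_i$ and $N_I(b_i)=1-b_i$ from $N_i(0)=1$ and $N_i(1)=0$; invoke Proposition \ref{PNSum-old} for the bare negation property; prove continuity piecewise with matching one-sided limits at the $a_i$, $b_i$; prove strict decrease by cases on where $x<y$ fall; and, for the converse, solve the defining formula for $N_i$ in terms of $N_I$, so that N1 comes from the boundary values while N2, strictness and continuity are inherited through the increasing affine reparametrization. Where you genuinely differ is that you close two gaps the paper leaves open. First, the paper asserts that checking the one-sided limits at each $a_i$ and $b_i$ is ``sufficient'' for continuity of $N_I$; that pasting argument is valid only when every point has a neighbourhood meeting finitely many summands, and for an infinite family whose intervals accumulate at a point one needs precisely your uniform estimate $|N_I(x)-N^S(x)|\le b_i-a_i$ on $[a_i,b_i]$ (a consequence of Lemma \ref{LNSum}) together with the fact that at most finitely many pairwise disjoint subintervals of $[0,1]$ can have length $\ge\varepsilon$. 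Second, in the two-interval case of strict monotonicity the paper concludes $N_I(y)<N_I(x)$ ``since $1-b_j<1-a_i$'', but the operative comparison is $N_I(y)\le 1-a_j$ versus $N_I(x)\ge 1-b_i$, and these bounds coincide for adjacent intervals ($b_i=a_j$); your explicit resolution of that subcase (one of $x<b_i$ or $y>a_j$ must hold, after which within-interval strictness applies) is what the argument actually requires. In short: same route, but your version is rigorous for arbitrary, possibly accumulating and possibly adjacent, families of summand intervals, at the cost of the extra $\varepsilon$-argument, whereas the paper's shorter proof is fully watertight only for locally finite, non-adjacent configurations.
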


\begin{proof} ($\Rightarrow$) If all the $N_i$'s are fuzzy negations, then, by Proposition \ref{PNSum-old}, $N_I$ is a fuzzy negation. In addition, for each $i\in I$, 
$N_I(a_i)=1-b_i+(b_i-a_i)N_i\left(\frac{a_i-a_i}{b_i-a_i}\right)=1-b_i+(b_i-a_i)N_i(0)=1-a_i$.

Now, suppose that for each $i\in I$,  $N_i$ is continuous. Then, $N_{I|_{[a_i, b_i]}}(x) = (1{-}b_i)+(b_i{-}a_i) ~N_i\left(\dfrac{x-a_i}{b_i-a_i}\right)$ is clearly continuous. Since, $N^S$ is continuous then it is sufficient to prove that for each $i\in I$,
$\lim\limits_{x\searrow a_i} N^S( x)=N_I(a_i)$ and $\lim\limits_{x\nearrow b_i} N^S(x)=N_I(b_i)$. 
In fact, 
\begin{eqnarray*}
\lim\limits_{x\searrow a_i}N^S(x)
  & =&\lim\limits_{x\searrow a_i}1-x \\
  & =&1-a_i \\
  & = &(1-b_i)+(b_i-a_i) ~N_i\left(\dfrac{a_i-a_i}{b_i-a_i}\right) \\
  & = & N_I(a_i).
 \end{eqnarray*}
Analogously, we prove that $N^S\left (\lim\limits_{x\nearrow b_i} x\right )=N_I(b_i)$. 

Now we will prove that $N_I$ is strictly decreasing when all $N_i$ are strictly decreasing. If $x< y$ then we have the following cases:
\begin{description}
 \item[Case 1:]~ If $x,y\in [a_i,b_i]$ for some $i\in I$, then $\dfrac{x-a_i}{b_i-a_i}< \dfrac{y-a_i}{b_i-a_i}$ and therefore 
 $N_i\left(\dfrac{y-a_i}{b_i-a_i}\right) < N_i\left(\dfrac{x-a_i}{b_i-a_i}\right)$. So, by  Eq. (\ref{NSum}), 
 $N_I(y)< N_I(x)$.
 \item[Case 2:]~ If $x\in [a_i,b_i]$ and $y\in [a_j,b_j]$ for some $i,j\in I$ such that $i\neq j$ then $a_i< b_i\leq a_j< b_j$. So, by Lemma \ref{LNSum}, 
 $N_I(y)\in [1-b_j,1-a_j]$ and $N_I(x)\in [1-b_i,1-a_i]$. Thus, since $1-b_j< 1-a_i$, then $N_I(y)< N_I(x)$.
 \item[Case 3:]~ If $x\in [a_i,b_i]$ for some $i\in I$ and $y\not \in \bigcup\limits_{j\in I} [a_j,b_j]$, then $b_i<y$ and therefore $1-y< 1-b_i$. 
 Since, by Lemma \ref{LNSum}, $1-b_i\leq N_I(x)$ and by Eq. (\ref{NSum}) $N_I(y)=1-y$, then follows that $N_I(y)< N_I(x)$.
 \item[Case 4:]~ If $x\not\in \bigcup_{j\in I} [a_j,b_j]$ and $y\in [a_i,b_i]$ for some $i\in I$, then $x< a_i$ and,
 by Eq. (\ref{NSum}) $N_I(x)=1-x$. Therefore,  by Lemma  \ref{LNSum},  $N_I(y)\leq 1-a_i< 1-x=N_I(x)$.
 \item[Case 5:]~ If $x,y\not \in \bigcup\limits_{i\in I} [a_i,b_i]$ then by  Eq. (\ref{NSum}), $N_I(y)=1-y< 1-x=N_I(x)$.
\end{description}
Therefore, $N_I$ is strictly decreasing.

($\Leftarrow$) If $N_I$ is a fuzzy negation such that $N_I(a_i)=1-a_i$ and $N_I(b_i)=1-b_i$ then, for each  $i\in I$, 
\begin{eqnarray*}
N_i(0) & = & N_i\left(\frac{a_i-a_i}{b_i-a_i}\right)\\
& = & \frac{-(1-b_i)+(1-b_i)+(b_i-a_i)N_i\left(\frac{a_i-a_i}{b_i-a_i}\right)}{b_i-a_i}\\
& = & \frac{N_I(a_i)-(1-b_i)}{b_i-a_i}\\
& = & \frac{1-a_i-(1-b_i)}{b_i-a_i}\\
& = & 1.
\end{eqnarray*} 
 Analogously, 
\begin{eqnarray*} 
N_i(1) & = & N_i\left(\frac{b_i-a_i}{b_i-a_i}\right)\\
& = & \frac{-(1-b_i)+(1-b_i)+(b_i-a_i)N_i\left(\frac{b_i-a_i}{b_i-a_i}\right)}{b_i-a_i}\\
& = & \frac{N_I(b_i)-(1-b_i)}{b_i-a_i}\\
& = &  \frac{1-b_i-(1-b_i)}{b_i-a_i}\\
& = & 0.
\end{eqnarray*} 
Let $i\in I$, $x,y\in [0,1]$ such that $x\leq y$ and $x_i=a_i+(b_i-a_i)x$ and $y_i=a_i+(b_i-a_i)y$.  Then, $a_i\leq x_i\leq y_i\leq b_i$ and therefore, 
$N_I(y_i)\leq N_I(x_i)$. So, $1-b_i+(b_i-a_i)N_i(y)=1-b_i+(b_i-a_i)N_i\left(\frac{y_i-a_i}{b_i-a_i}\right)\leq 
1-b_i+(b_i-a_i)N_i\left(\frac{x_i-a_i}{b_i-a_i}\right)=1-b_i+(b_i-a_i)N_i(x)$. Hence, $N_i(y)\leq N_i(x)$. 
Therefore, $N_i$ is a fuzzy negation for each $i\in I$.
In addition, from Eq.  (\ref{NSum}), clearly for each $i\in I$, if $N_I$ is continuous then $N_i$ also is continuous and if 
$N_I$ is strictly then $N_i$ also is strict. 
\end{proof}\\

\begin{proposition}\label{PNSum-new2}
Let $(]a_i, b_i[)_{i\in I}$ be a family of nonempty, pairwise disjoint open 
subintervals of $[0,1]$ and $(N_i)_{i\in I}$ be a family of  fuzzy negations such that $N_i$ is non-filling when $a_i=0$ and non-vanishing when $b_i=1$. 
Then, the ordinal sum $N_I$ of the summands 
$(a_i, b_i, N_i)_{i\in I}$ is frontier.
\end{proposition}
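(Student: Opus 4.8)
The plan is to verify the two defining conditions of a frontier negation separately: non-vanishing ($N_I(x)=0$ iff $x=1$) and non-filling ($N_I(x)=1$ iff $x=0$). First I would note that, by Proposition \ref{PNSum-old}, $N_I$ is already a fuzzy negation, so $N_I(0)=1$ and $N_I(1)=0$ hold automatically by N1. Consequently the ``if'' directions of both biconditionals come for free, and it remains only to establish the two ``only if'' implications: $N_I(x)=0 \Rightarrow x=1$ and $N_I(x)=1 \Rightarrow x=0$.

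For non-vanishing I would assume $N_I(x)=0$ and split on whether $x$ lies in a summand interval. If $x\notin\bigcup_i[a_i,b_i]$, then $N_I(x)=1-x=0$ forces $x=1$ at once. If instead $x\in[a_i,b_i]$ for some $i$, then Lemma \ref{LNSum} gives $N_I(x)\geq 1-b_i$; since $N_I(x)=0$, this forces $b_i=1$. With $b_i=1$ the formula (\ref{NSum}) reduces to $N_I(x)=(1-a_i)\,N_i\!\left(\frac{x-a_i}{1-a_i}\right)$, and as $1-a_i>0$ we obtain $N_i\!\left(\frac{x-a_i}{1-a_i}\right)=0$. Here the hypothesis enters: because $b_i=1$, the summand $N_i$ is non-vanishing, so its argument must equal $1$, i.e. $x=a_i+(1-a_i)=1$. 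In all cases $x=1$.

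The non-filling case is dual. Assuming $N_I(x)=1$, the off-interval case gives $1-x=1$, hence $x=0$. On a summand interval $[a_i,b_i]$, Lemma \ref{LNSum} now supplies the upper bound $N_I(x)\leq 1-a_i$, so $N_I(x)=1$ forces $a_i=0$; then (\ref{NSum}) becomes $N_I(x)=(1-b_i)+b_i\,N_i\!\left(\frac{x}{b_i}\right)=1$, which, using $b_i>0$, yields $N_i\!\left(\frac{x}{b_i}\right)=1$. Since $a_i=0$, the hypothesis makes $N_i$ non-filling, so the argument is $0$ and $x=0$.

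The only delicate point --- and where the hypotheses are genuinely needed --- is precisely the boundary summands. When $b_i<1$ the gap $1-b_i>0$ keeps $N_I$ strictly positive throughout $[a_i,b_i]$, and when $a_i>0$ the bound $N_I\leq 1-a_i<1$ keeps $N_I$ strictly below $1$ there; hence interior summands can never produce a spurious zero or a spurious value $1$, regardless of the behaviour of $N_i$. The assumptions ``non-filling when $a_i=0$'' and ``non-vanishing when $b_i=1$'' are therefore invoked exactly, and only, on the two summands (if any) that touch an endpoint of $[0,1]$, which is what makes them the right hypotheses.
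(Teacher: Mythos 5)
Your proof is correct and takes essentially the same route as the paper's: both rest on Proposition \ref{PNSum-old}, the bounds of Lemma \ref{LNSum}, a case split on whether $x$ lies in a summand interval, and an invocation of the non-filling/non-vanishing hypotheses exactly on the summands with $a_i=0$ or $b_i=1$. The only difference is organizational: the paper argues directly that $N_I(x)\in\, ]0,1[$ for every $x\in\, ]0,1[$, while you prove the contrapositive implications $N_I(x)=0\Rightarrow x=1$ and $N_I(x)=1\Rightarrow x=0$, which is logically the same argument.
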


\begin{proof}
 By Proposition \ref{PNSum-old}, $N_I$ is a fuzzy negation. Let $x\in ]0,1[$. If $x\not\in \bigcup\limits_{i\in I} [a_i,b_i]$ 
 then, from Eq.  (\ref{NSum}), $N_I(x)=1-x\in ]0,1[$. Suppose that $x\in [a_i,b_i]$ for some $i\in I$. If  $a_i\neq 0$ and $b_i\ne1 1$ 
 then, by  Lemma \ref{LNSum}, 
 $1 > b_i \geq N_I(x)\geq a_i >0$, i.e.  $N_I(x)\in ]0,1[$.  If  $a_i=0$ then $\frac{x-a_i}{b_i-a_i}=\frac{x}{b_i}\in ]0,1[$. So, 
 because $N_i$ is non-filling, $N_I(x)=1-b_i+b_iN_i(\frac{x}{b_i}) < 1$. Analogously, if $b_i=1$ then $\frac{x-a_i}{b_i-a_i}=\frac{x-a_i}{1-a_i}\in ]0,1[$.
 So,  because $N_i$ is non-vanishing, $N_I(x)=(1-a_i)N_i(\frac{x-a_i}{1-a_i}) > 1$. Therefore, for each $x\in ]0,1[$, $N_I(x)\in ]0,1[$, i.e. 
 $N_I$ is frontier.
\end{proof}\\

\begin{proposition}\label{PNSum-new3}
Let $(]a_i, b_i[)_{i\in I}$ be a family of nonempty, pairwise disjoint open subintervals of $[0,1]$ and $(N_i)_{i\in I}$ be a family of fuzzy negations. 
If the ordinal sum $N_I$ of the summands $(a_i, b_i, N_i)_{i\in I}$ satisfies the following two properties 
\begin{enumerate}
 \item $N_I(x)=1-a_i$ for some $i\in I$ only when $x= a_i$; and
 \item $N_I(x)=1-b_i$ for some $i\in I$ only when $x= b_i$
\end{enumerate}
then $N_i$ is frontier for each $i\in I$.
\end{proposition}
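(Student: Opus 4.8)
The plan is to fix an arbitrary index $i\in I$ and show directly that $N_i$ is simultaneously non-filling and non-vanishing, which is precisely what it means for $N_i$ to be frontier. Since every $N_i$ is assumed to be a fuzzy negation, axiom N1 gives $N_i(0)=1$ and $N_i(1)=0$ for free; hence the trivial directions $t=0\Rightarrow N_i(t)=1$ and $t=1\Rightarrow N_i(t)=0$ hold automatically, and the whole content of the proof reduces to the two converse implications $N_i(t)=1\Rightarrow t=0$ (non-filling) and $N_i(t)=0\Rightarrow t=1$ (non-vanishing).

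The main tool is the affine reparametrisation that the ordinal sum builds into Eq.~(\ref{NSum}). For $t\in[0,1]$ I would set $x=a_i+(b_i-a_i)t$; because $a_i<b_i$ (the interval $]a_i,b_i[$ is nonempty), the map $t\mapsto x$ is an increasing bijection of $[0,1]$ onto $[a_i,b_i]$ whose inverse is $x\mapsto\frac{x-a_i}{b_i-a_i}$. Substituting into the first branch of Eq.~(\ref{NSum}) gives $N_I(x)=(1-b_i)+(b_i-a_i)\,N_i(t)$, which turns each value of $N_i$ into a value of $N_I$ and, in particular, carries the two extreme values of $N_i$ to the endpoints of the range $[1-b_i,1-a_i]$ that Lemma~\ref{LNSum} assigns to $N_I$ on this interval.

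For non-filling I would assume $N_i(t)=1$. The displayed identity then yields $N_I(x)=(1-b_i)+(b_i-a_i)=1-a_i$ at the corresponding point $x\in[a_i,b_i]$, so the first hypothesis of the proposition forces $x=a_i$, whence $t=\frac{x-a_i}{b_i-a_i}=0$. Symmetrically, for non-vanishing I would assume $N_i(t)=0$, obtain $N_I(x)=1-b_i$, invoke the second hypothesis to get $x=b_i$, and conclude $t=1$. Together these show that $N_i(t)=1$ holds only at $t=0$ and $N_i(t)=0$ holds only at $t=1$, so $N_i$ is frontier; as $i\in I$ was arbitrary, every summand is frontier.

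Because the whole argument is essentially a translation of the two hypotheses through the affine substitution, there is no serious obstacle. The two hypotheses are global statements about $N_I$, so once I have produced the value $1-a_i$ (resp.\ $1-b_i$) at a concrete point $x$, uniqueness of the preimage is immediate and no further appeal is needed. The only points demanding care are bookkeeping ones: applying each hypothesis at the correct point $x$ and for the correct index $i$, and checking the boundary cases $t=0,1$ (where $x=a_i,b_i$ may also be an endpoint of a neighbouring interval) against the well-definedness of $N_I$. Lemma~\ref{LNSum} serves here as a useful consistency check, confirming that $1-a_i$ and $1-b_i$ are exactly the endpoints of the range of $N_I$ on $[a_i,b_i]$ and that values of $N_I$ taken outside $\bigcup_{j\in I}[a_j,b_j]$ stay away from them.
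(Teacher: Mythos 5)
Your proposal is correct and takes essentially the same approach as the paper: transport a point $t$ with $N_i(t)=1$ (resp.\ $N_i(t)=0$) to $x=a_i+(b_i-a_i)t\in[a_i,b_i]$, compute $N_I(x)=1-a_i$ (resp.\ $N_I(x)=1-b_i$) from the first branch of the ordinal sum definition, and invoke the corresponding hypothesis to force $x=a_i$ (resp.\ $x=b_i$), hence $t=0$ (resp.\ $t=1$). The only difference is presentational: the paper argues by contradiction (supposing some $N_i$ is not frontier), whereas you prove the two implications directly, which is the same computation read contrapositively.
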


\begin{proof}
 Suppose that  for some $i\in I$, $N_i$ is not frontier. Then there exists $x\neq 1$ such that $N_i(x)=0$ or 
there exists $x\neq 0$ such that $N_i(x)=1$.  Let $x_i=a_i+(b_i-a_i)x$ and therefore, $x_i\in [a_i,b_i]$. In the first case, we have that 
$N_I(x_i)=1-b_i+(b_i-a_i)N_i\left(\frac{x_i-a_i}{b_i-a_i}\right) =
1-b_i+(b_i-a_i)N_i(x)=1-b_i$. So, by the second property, $x_i=b_i$ and therefore, $x=1$ which is a contradiction.
The second case is analogous.
Therefore, for each $i\in I$, the fuzzy negation  $N_i$ is frontier.
\end{proof}\\

\begin{theorem}\label{PNSum-strong-negationb}
Let $(]a_i, b_i[)_{i\in I}$ be a family of nonempty, pairwise disjoint open subintervals of $[0,1]$ and $(N_i)_{i\in I}$ be a family of fuzzy negations.
If all the $N_i$'s are strict fuzzy negations and, for each $i\in I$, there exists $j\in I$ such that $[a_j,b_j]=[1-b_i,1-a_i]$ and $N_j=N_i^{-1}$, then the ordinal sum $N_I$ of the summands $(a_i, b_i, N_i)_{i\in I}$, is a strong fuzzy negation.
\end{theorem}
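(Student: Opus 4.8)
The plan is to invoke the structural facts already established and then verify the single remaining axiom N3, namely $N_I(N_I(x))=x$. First I would note that since every $N_i$ is strict, Proposition \ref{PNSum-new} already gives that $N_I$ is a strict fuzzy negation satisfying $N_I(a_i)=1-a_i$ and $N_I(b_i)=1-b_i$ for each $i\in I$; strictness also guarantees that each inverse $N_i^{-1}$ exists and is again a strict fuzzy negation, so the hypothesis $N_j=N_i^{-1}$ is meaningful. The one new thing to prove is therefore the involution property, and the key structural observation driving it is that the pairing hypothesis forces the union of closed blocks $\bigcup_{i\in I}[a_i,b_i]$ to be symmetric under the reflection $x\mapsto 1-x$: each block $[a_i,b_i]$ is carried to $[1-b_i,1-a_i]=[a_j,b_j]$, which again belongs to the family.

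I would then split the verification of $N_I(N_I(x))=x$ into two cases guided by Lemma \ref{LNSum}. For $x\in[a_i,b_i]$, write $t=\frac{x-a_i}{b_i-a_i}\in[0,1]$, so that $N_I(x)=(1-b_i)+(b_i-a_i)N_i(t)$. By Lemma \ref{LNSum} this value lies in $[1-b_i,1-a_i]=[a_j,b_j]$, where $j$ is the index paired with $i$. Substituting $a_j=1-b_i$ and $b_j=1-a_i$ (hence $b_j-a_j=b_i-a_i$ and $1-b_j=a_i$) into the defining formula on the $j$-block, the normalized argument simplifies to $\frac{N_I(x)-a_j}{b_j-a_j}=N_i(t)$, and therefore $N_I(N_I(x))=a_i+(b_i-a_i)N_j(N_i(t))=a_i+(b_i-a_i)N_i^{-1}(N_i(t))=a_i+(b_i-a_i)t=x$.

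For $x\notin\bigcup_{i\in I}[a_i,b_i]$, the ``otherwise'' branch gives $N_I(x)=1-x$. By the reflection symmetry of the block union noted above, $1-x$ again lies outside $\bigcup_{i\in I}[a_i,b_i]$, so a second application of the same branch yields $N_I(1-x)=1-(1-x)=x$. Combining the two cases establishes N3, and together with the strictness already supplied by Proposition \ref{PNSum-new} this shows that $N_I$ is a strong fuzzy negation.

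I expect the delicate point to be not the algebra but the bookkeeping that makes each branch compose correctly with itself: one must be certain that $N_I$ sends $[a_i,b_i]$ precisely into the paired block $[a_j,b_j]$ (so that the inner and outer summands are genuinely inverse to one another) and that the complement of the block union is invariant under $x\mapsto 1-x$. Both rest on Lemma \ref{LNSum} and on the involutive nature of the pairing $i\leftrightarrow j$, which I would check for consistency, since applying the hypothesis to $j$ returns $[a_i,b_i]$ together with $N_i$, forcing the index paired with $j$ to be $i$ because the blocks are pairwise disjoint.
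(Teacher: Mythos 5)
Your proposal is correct and follows essentially the same route as the paper's proof: show that $N_I$ maps each block $[a_i,b_i]$ into its paired block $[a_j,b_j]=[1-b_i,1-a_i]$ via Lemma \ref{LNSum}, cancel $N_j\circ N_i=N_i^{-1}\circ N_i$ after the same algebraic simplification, and handle points outside $\bigcup_{i\in I}[a_i,b_i]$ by noting that the pairing hypothesis makes this union symmetric under $x\mapsto 1-x$, so the standard-negation branch composes with itself. The only cosmetic difference is that you invoke Proposition \ref{PNSum-new} for strictness of $N_I$ where the paper only needs Proposition \ref{PNSum-old} to know $N_I$ is a fuzzy negation; both suffice.
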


\begin{proof}
From Proposition \ref{PNSum-old}, $N_I$ is a fuzzy negation. Besides this, for any $x\in [0,1]$ if $x\in [a_i,b_i]$ for some $i\in I$ then 
 for hypothesis there exists $j\in I$ such that $[a_j,b_j]=[1-b_i,1-a_i]$ and by Eq. (\ref{NSum}),
$N_I(x)=(1-b_i)+(b_i-a_i) ~N_i\left(\dfrac{x-a_i}{b_i-a_i}\right) \in [1-b_i,1-a_i]=[a_j,b_j]$. Therefore,
\begin{eqnarray*}
  N_I(N_I(x)) & = & \hspace{-0.25cm} (1{-}b_j){+}(b_j{-}a_j)N_j\left( 
   \dfrac{(1{-}b_i){+}(b_i{-}a_i)N_i\left(\dfrac{x{-}a_i}{b_i{-}a_i}\right){-}a_j}{b_j-a_j}   
   \right ) \\
& = &  a_i{+}(b_i{-}a_i) N_i^{-1}\left( 
   \dfrac{(1{-}b_i){+}(b_i{-}a_i)N_i\left(\dfrac{x{-}a_i}{b_i{-}a_i}\right){-}(1{-}b_i)}{b_i-a_i}   
   \right ) \\ 
   & = & x.
  \end{eqnarray*}
If $x\not\in\bigcup\limits_{i\in I} [a_i,b_i]$ then $N_I(x)=1-x$ and, by Lemma \ref{LNSum} and hypothesis,  
$N_I(x)\not\in\bigcup\limits_{i\in I} [1-b_i,1-a_i]=\bigcup\limits_{i\in I} [a_i,b_i]$. 
So, $N_I(N_I(x))=1-(1-x)=x$.
\end{proof}\\

\begin{theorem}\label{PNSum-strong-negation}
Let $(]a_i, b_i[)_{i\in I}$ be a family of nonempty, pairwise disjoint open 
subintervals of $[0,1]$ and $(N_i)_{i\in I}$ be a family of fuzzy negations.
If the ordinal sum $N_I$ of the summands $(a_i, b_i, N_i)_{i\in I}$,
is a strong fuzzy negation then  all the $N_i$'s are strict fuzzy negations and, for each $i\in I$, there exists $j\in I$ such that 
$[a_j,b_j]=[1-b_i,1-a_i]$ and $N_j=N_i^{-1}$. In addition, if for each $i,j\in I$, $a_i\neq b_j$  and $N_i\neq N^S$ then  
for each $i\in I$, there exists $j\in I$ such that $[a_j,b_j]=[1-b_i,1-a_i]$ and $N_j=N_i^{-1}$.
\end{theorem}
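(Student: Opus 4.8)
The plan is to split the argument into the strictness conclusion, which holds unconditionally, and the reflection conclusion, for which the extra hypotheses are genuinely needed. First I would record that a strong fuzzy negation is automatically strict (continuous and strictly decreasing), as noted just after the examples of negations; hence $N_I$ is continuous and strictly decreasing. Next I would verify the two boundary identities required to invoke the backward direction of Proposition~\ref{PNSum-new}: since each $N_i$ is a fuzzy negation we have $N_i(0)=1$ and $N_i(1)=0$, so directly from Eq.~(\ref{NSum})
\begin{eqnarray*}
N_I(a_i) &=& (1-b_i)+(b_i-a_i)N_i(0) = 1-a_i, \\
N_I(b_i) &=& (1-b_i)+(b_i-a_i)N_i(1) = 1-b_i .
\end{eqnarray*}
With $N_I$ continuous, strictly decreasing and satisfying $N_I(a_i)=1-a_i$, $N_I(b_i)=1-b_i$ for each $i$, Proposition~\ref{PNSum-new} immediately yields that every $N_i$ is a continuous, strictly decreasing fuzzy negation, i.e.\ strict. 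This settles the first assertion with no extra hypotheses.

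For the reflection structure I would fix $i\in I$ and exploit that $N_I$ is an involution. Because $N_I$ is strict, its restriction $N_I|_{[a_i,b_i]}$ is a decreasing homeomorphism of $[a_i,b_i]$ onto $[1-b_i,1-a_i]$ (its values run from $N_I(a_i)=1-a_i$ down to $N_I(b_i)=1-b_i$). Given $y\in[1-b_i,1-a_i]$, write $y=N_I(x)$ with $x\in[a_i,b_i]$; then $N_I(y)=N_I(N_I(x))=x$, and solving $y=(1-b_i)+(b_i-a_i)N_i\!\left(\frac{x-a_i}{b_i-a_i}\right)$ for $x$ gives
\begin{eqnarray*}
N_I(y) &=& a_i+(b_i-a_i)\,N_i^{-1}\!\left(\frac{y-(1-b_i)}{b_i-a_i}\right), \qquad y\in[1-b_i,1-a_i],
\end{eqnarray*}
where $N_i^{-1}$ is again a strict negation. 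The point of this formula is that it coincides with the ordinal-sum expression on an interval $[a_j,b_j]=[1-b_i,1-a_i]$ exactly when $N_j=N_i^{-1}$; so once one knows that $[1-b_i,1-a_i]$ is genuinely a summand interval, comparing the two expressions forces $N_j=N_i^{-1}$ and finishes the proof.

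The main obstacle, and the only place where the extra hypotheses $N_i\neq N^S$ and $a_i\neq b_j$ (for all $i,j$) enter, is to show that the reflected interval $R_i:=[1-b_i,1-a_i]$ really is one of the summand intervals rather than a patchwork of summand pieces interleaved with arcs of $N^S$. To handle this I would introduce the disagreement set $E=\{x\in[0,1]:N_I(x)\neq 1-x\}$. Outside $\bigcup_k[a_k,b_k]$ one has $N_I=N^S$ by Eq.~(\ref{NSum}), so $E\subseteq\bigcup_k\,]a_k,b_k[$; and a direct check shows $E$ is $N_I$-invariant, since $N_I(x)\in E$ iff $N_I(N_I(x))\neq 1-N_I(x)$ iff $x\neq 1-N_I(x)$ iff $x\in E$. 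The hypothesis $N_i\neq N^S$ guarantees $N_i^{-1}\neq N^S$, so the displayed formula yields $N_I(y)\neq 1-y$ for some $y\in\,]1-b_i,1-a_i[$; hence $\mathrm{int}(R_i)\cap E\neq\emptyset$, and this point lies in some $]a_j,b_j[$, so $\mathrm{int}(R_i)$ meets a summand interval.

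It remains to upgrade this nonempty intersection to the equality $[a_j,b_j]=R_i$, and this is the delicate step I expect to be hardest. The separation hypothesis $a_i\neq b_j$ (together with pairwise disjointness of the open intervals) makes the closed summands $[a_k,b_k]$ pairwise \emph{separated}, so distinct summand intervals cannot abut; combined with the continuity of $N_I$ and the fact that $N_I$ meets the anti-diagonal $y=1-x$ at every summand endpoint $a_k,b_k$, I would argue that the connected arc of the graph of $N_I$ over $R_i$ --- which by the displayed formula is the reflection of the arc over $[a_i,b_i]$ and is therefore not a subsegment of the anti-diagonal --- must coincide with the graph over a single summand $[a_j,b_j]$, forcing $a_j=1-b_i$ and $b_j=1-a_i$. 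This endpoint-matching, ruling out both partial overlaps and interior gaps where $N_i$ might agree with $N^S$, is precisely the configuration that the hypotheses $N_i\neq N^S$ and $a_i\neq b_j$ are designed to exclude, and it is where the care of the argument will concentrate.
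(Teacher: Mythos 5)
Your first half is correct, and it takes a genuinely different (and cleaner) route than the paper: the paper reproves strict decreasingness of each $N_i$ by hand and deduces continuity of the $N_i$ from Lemma \ref{LNSum}, whereas you get both at once from the backward direction of Proposition \ref{PNSum-new} after checking $N_I(a_i)=1-a_i$ and $N_I(b_i)=1-b_i$. Your reduction of ``$N_j=N_i^{-1}$'' to ``$[1-b_i,1-a_i]$ is itself a summand interval'', via the involution formula $N_I(y)=a_i+(b_i-a_i)\,N_i^{-1}\!\left(\frac{y-(1-b_i)}{b_i-a_i}\right)$ on $[1-b_i,1-a_i]$, and your observation that the interior of this reflected interval must meet some $]a_j,b_j[$ (using the disagreement set $E$ and $N_i\neq N^S$), are also correct.

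However, the proof stops exactly at the step that carries all the weight. The final paragraph (``I would argue that the connected arc \dots must coincide with the graph over a single summand'') is a declaration of intent, not an argument: you never rule out that $[1-b_i,1-a_i]$ is covered by \emph{several} summands separated by gaps on which $N_I$ runs along the anti-diagonal, and you assert without proof that the hypotheses $a_k\neq b_l$ and $N_k\neq N^S$ exclude this configuration. They do not, because $N_k\neq N^S$ is an inequality of functions, not a pointwise inequality: a summand negation may still agree with the rescaled standard negation on a sub-interval, and that is exactly the loophole. Concretely, let $f:[0.1,0.2]\to[0.8,0.9]$ be a decreasing piecewise-linear homeomorphism with $f(x)=1-x$ precisely for $x\in\{0.1\}\cup[0.13,0.15]\cup\{0.2\}$, and take three summands on $[0.1,0.2]$, $[0.8,0.86]$, $[0.87,0.9]$ whose negations are the rescalings to $[0,1]$ of $f$, of $f^{-1}$ restricted to $[0.8,0.86]$, and of $f^{-1}$ restricted to $[0.87,0.9]$. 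All three negations differ from $N^S$, no $a_k$ equals any $b_l$, and the resulting ordinal sum equals $f$ on $[0.1,0.2]$, $f^{-1}$ on $[0.8,0.9]$, and $N^S$ elsewhere (this is consistent on the gap $]0.86,0.87[$ because $f^{-1}$ coincides with the anti-diagonal on $[0.85,0.87]$); hence it is a strong negation, yet no summand interval equals $[1-b_1,1-a_1]=[0.8,0.9]$. So the step you postponed is not merely delicate: it cannot be done, and the reflection claim of the theorem is false under the stated hypotheses. For what it is worth, the paper's own proof slips at the same spot --- its displayed ``$\neq$'' uses $N_j\neq N^S$ as though it meant $N_j(z)\neq 1-z$ at the particular point $z$ in question, which is precisely the pointwise strengthening that would be needed to exclude the configuration above. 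In short: your part one is complete; your part two has a genuine gap, and with these hypotheses it is unfillable.
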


\begin{proof}
If $x<y$ and $i\in I$, then taking $x_i=a_i+(b_i-a_i)x$ and $y_i=a_i+(b_i-a_i)y$, we have that $x_i< y_i$, $x_i,y_i\in [a_i,b_i]$,
$N_i(x)=N_i\left(\dfrac{x_i-a_i}{b_i-a_i}\right)$ and $N_i(y)=N_i\left(\dfrac{y_i-a_i}{b_i-a_i}\right)$. Since, 
$N_I(y_i)<N_I(x_i)$, then by Eq. (\ref{NSum}), $(1-b_i)+(b_i-a_i)~N_i\left(\dfrac{y_i-a_i}{b_i-a_i}\right)<(1-b_i)+(b_i-a_i)~N_i\left(\dfrac{x_i-a_i}{b_i-a_i}\right)$. So, 
$N_i(y)=N_i\left(\dfrac{y_i-a_i}{b_i-a_i}\right)<N_i\left(\dfrac{x_i-a_i}{b_i-a_i}\right)=N_i(x)$ and therefore, each $N_i$ is strictly decreasing.
On the other hand, by Lemma \ref{LNSum} is clear that case some $N_i$ is non continuous then by Eq. (\ref{NSum}), $N_I$ also would be non continuous. 
Hence, each $N_i$ is strict. 

Now, suppose that for each $i,j\in I$, $a_i\neq b_j$ when $i\neq j$ and $N_i\neq N^S$. If $1-a_i\in \bigcup\limits_{j\in I} [a_j,b_j]$ 
 then  $1-a_i\in [a_j,b_j]$ for some $j\in I$. 
If $1-a_i < b_j$ then there exist $\epsilon> 0$ such that 
 $1-a_i+\epsilon< b_j$ but $a_i-\epsilon\not\in \bigcup\limits_{j\in I} [a_j,b_j]$.
 So, $N_I(a_i-\epsilon))= 1-a_i+\epsilon\in ]a_j,b_j[$ and therefore, 
 
 \begin{eqnarray*}
   a_i-\epsilon &=& N_I(N_I(a_i-\epsilon)) \\
   &=& 1-b_j+(b_j-a_j)N_j\left(\frac{(1-a_i+\epsilon)-a_j}{b_j-a_j}\right) \mbox{ by Eq. (\ref{NSum})} \\
   &\neq & 1-b_j+(b_j-a_j)\left(1- \frac{(1-a_i+\epsilon)-a_j}{b_j-a_j}\right)  \mbox{ since $N_J\neq N^S$}\\
   &=& a_i-\epsilon.    
  \end{eqnarray*}
  which is a contradiction, and therefore $1-a_i=b_j$. Analogously is possible to prove that $1-b_i=a_j$. 
  
  Suppose that $N_j\neq N_i^{-1}$ then there exists $x\in [0,1]$ such that $N_j(x)\neq N_i^{-1}(x)$ and therefore. Let $x_j=a_j+(b_j-a_j)x$ and therefore, 
  $x=\frac{x_j-a_j}{b_j-a_j}$.
  \begin{eqnarray*}
   N_I(N_I(x_j)) & = & N_I\left(1-b_j+(b_j-a_j)N_j\left(\frac{x_j-a_j}{b_j-a_j}\right)\right)\\
   & = & N_I(a_i+(b_i-a_i)N_j(x)) \\
   & \neq & N_I(a_i+(b_i-a_i)N_i^{-1}(x)) \\
   & = & a-b_i+(b_i-a_i)N_i\left(\frac{a_i+(b_i-a_i)N_i^{-1}(x)-a_i}{b_i-a_i}\right) \\
   & = & a-b_i+(b_i-a_i)x \\
   & = & x_j
  \end{eqnarray*}
which is a contradiction once $N_I$ is strong. Therefore,  $N_j=N_i^{-1}$.
\end{proof}\\

\begin{corollary}
 Let $(]a_i, b_i[)_{i\in I}$ be a family of nonempty, pairwise disjoint open 
subintervals of $[0,1]$ and $(N_i)_{i\in I}$ be a family of fuzzy negations such that for each $i,j\in I$, $a_i\neq b_j$  and $N_i\neq N^S$.
Then, the ordinal sum $N_I$ of the summands $(a_i, b_i, N_i)_{i\in I}$,
is a strong fuzzy negation if and only if   all the $N_i$'s are strict fuzzy negations and, for each $i\in I$, there exists $j\in I$ such that 
$[a_j,b_j]=[1-b_i,1-a_i]$ and $N_j=N_i^{-1}$. 
\end{corollary}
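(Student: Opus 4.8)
The plan is to read off this biconditional directly from the two theorems immediately preceding it, since Theorems~\ref{PNSum-strong-negationb} and~\ref{PNSum-strong-negation} together already supply both implications; the corollary merely packages them under the standing hypotheses $a_i\neq b_j$ and $N_i\neq N^S$, which are exactly the side conditions that one of those theorems needs in order to reach its full conclusion.

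For the sufficiency direction $(\Leftarrow)$ I would invoke Theorem~\ref{PNSum-strong-negationb} verbatim: assuming every $N_i$ is strict and that for each $i\in I$ there exists $j\in I$ with $[a_j,b_j]=[1-b_i,1-a_i]$ and $N_j=N_i^{-1}$, that theorem concludes that $N_I$ is strong, and it does so without using the conditions $a_i\neq b_j$ or $N_i\neq N^S$ at all. Hence this half is immediate, and the extra hypotheses of the corollary play no role here.

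For the necessity direction $(\Rightarrow)$ I would appeal to Theorem~\ref{PNSum-strong-negation}. Starting from the assumption that $N_I$ is strong, the first conclusion of that theorem gives strictness of every $N_i$ with no further hypotheses; to obtain the interval matching $[a_j,b_j]=[1-b_i,1-a_i]$ together with $N_j=N_i^{-1}$, I would feed in the corollary's standing hypotheses $a_i\neq b_j$ (for $i\neq j$) and $N_i\neq N^S$, which are precisely the premises of the ``in addition'' clause of that theorem. Combining the strictness and the interval/inverse matching then yields the right-hand side, closing the equivalence.

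I expect no genuine obstacle here: the work is bookkeeping rather than new argument. The one point to verify carefully is that the hypotheses of the corollary align exactly with the premises of the ``in addition'' clause of Theorem~\ref{PNSum-strong-negation}; in particular, that ``for each $i,j\in I$, $a_i\neq b_j$'' reduces, in the case $i=j$, to the trivially true $a_i\neq b_i$ (as the intervals are nonempty) and so imposes no unintended constraint, and that the sufficiency direction really calls on none of these extra conditions. Confirming that the two theorems are logically complementary, with a hypothesis set that is neither too strong nor too weak for the biconditional to close, is all that the proof requires.
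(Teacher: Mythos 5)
Your proposal is correct and matches the paper exactly: the paper's own proof is simply ``Straightforward from Theorems~\ref{PNSum-strong-negationb} and~\ref{PNSum-strong-negation},'' i.e., the sufficiency direction from Theorem~\ref{PNSum-strong-negationb} and the necessity direction (with the side conditions $a_i\neq b_j$ and $N_i\neq N^S$ feeding the ``in addition'' clause) from Theorem~\ref{PNSum-strong-negation}. Your additional bookkeeping about which hypotheses are used where is a faithful elaboration of the same argument.
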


\begin{proof}
 Straighforward from Theorems \ref{PNSum-strong-negationb} and \ref{PNSum-strong-negation}. 
\end{proof}\\

\begin{proposition}
Let $(]a_i, b_i[)_{i\in I}$ be a family of nonempty, pairwise disjoint open subintervals of $[0,1]$, $(N_i)_{i\in I}$ be a family of inversible fuzzy negations, $N_I$ be the ordinal sums of the summands $(a_i, b_i, N_i)_{i\in I}$ and $N^{-1}_{I}$ be the ordinal sums of the summands 
$(1-b_i, 1-a_i, N_i^{-1})_{i\in I}$. 
Then, $N^{-1}_{I}$ is the inverse of $N_I$.
\end{proposition}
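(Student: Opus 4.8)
The plan is to verify directly that $N^{-1}_I$ is a two-sided inverse of $N_I$, i.e.\ that $N^{-1}_I(N_I(x)) = x$ and $N_I(N^{-1}_I(y)) = y$ for all $x,y\in[0,1]$. Before computing, I would first check that $N^{-1}_I$ is well defined as an ordinal sum. Since $x\mapsto 1-x$ is a decreasing bijection of $[0,1]$, the reflected intervals $(]1-b_i,1-a_i[)_{i\in I}$ are again nonempty, pairwise disjoint open subintervals, and each $N_i^{-1}$ is a (strict) fuzzy negation because each $N_i$ is invertible. Hence Proposition \ref{PNSum-old} applies and $N^{-1}_I$ is indeed a fuzzy negation. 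It is convenient to record that, writing the $i$-th summand of $N^{-1}_I$ as $(1-b_i, 1-a_i, N_i^{-1})$, its width equals $b_i-a_i$ and the additive constant of this summand in the ordinal-sum formula is $1-(1-a_i)=a_i$, so that for $y\in[1-b_i,1-a_i]$ one has $N^{-1}_I(y) = a_i + (b_i-a_i)\, N_i^{-1}\!\left(\frac{y-(1-b_i)}{b_i-a_i}\right)$.

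The core of the argument is a two-case computation of $N^{-1}_I(N_I(x))$, and the key tool for selecting the correct branch of the piecewise definition of $N^{-1}_I$ is Lemma \ref{LNSum}. In the first case, $x\in[a_i,b_i]$ for some $i\in I$; by Lemma \ref{LNSum}(1) we have $N_I(x)\in[1-b_i,1-a_i]$, so the $i$-th summand branch of $N^{-1}_I$ is triggered. Substituting $N_I(x)=(1-b_i)+(b_i-a_i)N_i\!\left(\frac{x-a_i}{b_i-a_i}\right)$ into the formula above, the argument of $N_i^{-1}$ simplifies to $N_i\!\left(\frac{x-a_i}{b_i-a_i}\right)$, and using $N_i^{-1}\circ N_i = \mathrm{id}$ the whole expression collapses to $a_i + (b_i-a_i)\frac{x-a_i}{b_i-a_i} = x$. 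In the second case, $x\notin\bigcup_{i\in I}[a_i,b_i]$, so $N_I(x)=N^S(x)=1-x$; by Lemma \ref{LNSum}(2), $1-x\notin\bigcup_{i\in I}[1-b_i,1-a_i]$, so $N^{-1}_I$ uses its $N^S$ branch and $N^{-1}_I(N_I(x)) = 1-(1-x) = x$.

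Finally I would obtain the reverse composition $N_I\circ N^{-1}_I = \mathrm{id}$. The cleanest route is by symmetry: applying the same reflected-summand construction to $N^{-1}_I$ returns the family $(1-(1-a_i),\,1-(1-b_i),\,(N_i^{-1})^{-1})_{i\in I} = (a_i,b_i,N_i)_{i\in I}$, whose ordinal sum is exactly $N_I$, so the identical two-case computation yields $N_I(N^{-1}_I(y)) = y$. Alternatively, since each $N_i$ invertible forces $N_I$ to be a strictly decreasing continuous fuzzy negation by Proposition \ref{PNSum-new}, $N_I$ is a bijection of $[0,1]$, and a one-sided inverse of a bijection is automatically the two-sided inverse.

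I expect the only genuinely delicate point to be branch selection: one must be certain that $N_I(x)$ lands in the reflected interval carrying $N_i^{-1}$ (and not in a neighbouring summand or in the $N^S$ region), and that the piecewise pieces agree at shared endpoints $a_i=b_j$. Both concerns are resolved by Lemma \ref{LNSum}, which pins down exactly which interval $N_I(x)$ occupies; the endpoint consistency is the same boundary matching already built into the ordinal-sum definition.
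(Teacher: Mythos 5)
Your proof is correct and follows essentially the same route as the paper's: a direct two-case computation that substitutes one ordinal-sum formula into the other, selects the branch via the interval correspondence $x\in[a_i,b_i] \Leftrightarrow N_I(x)\in[1-b_i,1-a_i]$, and collapses the composition using $N_i^{-1}\circ N_i=\mathrm{id}$. If anything, your write-up is slightly tighter than the paper's, since you invoke Lemma \ref{LNSum} explicitly to justify the branch selection and replace the paper's closing ``analogously'' with a genuine symmetry (or bijectivity) argument for the second composition.
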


\begin{proof}
It is sufficient to prove that $N_I \circ N^{-1}_{I}(x) = N^{-1}_{I}\circ N_I(x) = x$. Then
\begin{eqnarray*}
N_I \circ N^{-1}_{I}(x)& = &  N_I\left(\left\{\hspace{-0.1cm}
\begin{array}{ll}
1{-}(1{-}a_i){+}(1{-}a_i{-}(1{-}b_i))N_i^{-1}\left(\dfrac{x{-}(1{-}b_i)}{1{-}a_i{-}(1{-}b_i)}\right) & \mbox{if}~  x\in [1-b_i, 1-a_i],  \\
N^S(x) & \mbox{otherwise}.
\end{array}
\right.\hspace{-0.2cm}\right)\\
& = & N_I\left(\left\{\hspace{-0.1cm}
\begin{array}{ll}
a_i{+}(b_i{-}a_i)N_i^{-1}\left(\dfrac{x{-}1{+}b_i}{b_i{-}a_i}\right) &  \mbox{if}~  x\in [1{-}b_i, 1{-}a_i] \\
N^S(x) & \mbox{otherwise}.
\end{array}
\right.\right)\\
\end{eqnarray*}
Let $y= N^{-1}_I(x)$, then $y\in [a_i,b_i]$ if and only if $x\in [1-b_i,1-a_i]$.  Therefore,

\begin{eqnarray*}
N_I \circ N^{-1}_{I}(x) =  N_I(y) & \hspace{-0.3cm}= \hspace{-0.3cm} & \hspace{-0.3cm}
\left\{\hspace{-0.2cm}
\begin{array}{ll}
(1{-}b_i){+}(b_i{-}a_i) N_i\left(\dfrac{a_i{+}(b_i{-}a_i) N_i^{-1}\left(\dfrac{x{-}1{+}b_i}{b_i{-}a_i}\right){-}a_i}{b_i{-}a_i}\right) & \mbox{if}~  y\in [a_i, b_i],  \\
N^S(N^S(x)) & \mbox{otherwise}.
\end{array}
\right.\\
\hspace{-0.3cm}& \hspace{-0.3cm}= \hspace{-0.3cm} & \hspace{-0.3cm} \left\{
\begin{array}{ll}
(1-b_i)+(b_i-a_i) ~N_i\left(N_i^{-1}\left(\dfrac{x-1+b_i}{b_i-a_i}\right)\right) & \mbox{if}~  x\in [a_i, b_i],  \\
x & \mbox{otherwise}.
\end{array}
\right.\\
& = & x.
\end{eqnarray*}

Analogously, we prove that $N^{-1}_{I}\circ N_I(x) = x$. 
\end{proof}\\

Observe that, if $(]a_i, b_i[)_{i\in I}$ is a family of nonempty, pairwise disjoint open subintervals of $[0,1]$ and $(N_i)_{i\in I}$ is a family of fuzzy negations, then there exists a family $(]c_j, d_j[)_{i\in J}$ of nonempty, pairwise disjoint open subintervals of $[0,1]$ and a family $(N'_j)_{j\in J}$ of fuzzy negations where for each $l,j\in J$, $c_l\neq d_j$  and $N'_j\neq N^S$, such that $N_I=N'_J$.\\

\section{Ordinal sums of fuzzy negations and the ordinal sums of t-norms, t-conorms and fuzzy implications}

\begin{proposition}
Let $(T_i)_{i\in I}$ be a family of t-norms and $(]a_i, b_i[)_{i\in I}$ be a family of nonempty, pairwise disjoint open subintervals of $[0,1]$. Then 
\begin{eqnarray}\label{NSum}
N_{T_I}(x)=\left\{
\begin{array}{ll}
0 & \mbox{if}~  x\in [a_i, b_i]\mbox{ and } a_i> 0 ~\mbox{or}  x\not\in \bigcup_{i\in I} [a_i,b_i] \cup \{0\} \\~
1 & \mbox{if}~x=0 \\
N_{T_i}\left(\dfrac{x}{b_i}\right)  & \mbox{if}~  x\in ]a_i, b_i]~\mbox{and} ~a_i= 0.
\end{array}
\right. 
\end{eqnarray}
\end{proposition}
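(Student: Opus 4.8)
The plan is to compute $N_{T_I}(x)=\sup\{y\in[0,1]:T_I(x,y)=0\}$ directly from the definition of the natural negation together with the explicit formula (\ref{TSum}) for $T_I$, splitting into cases according to where $x$ sits relative to the family $(]a_i,b_i[)_{i\in I}$. The structural fact I would establish first is a localization: since the open intervals are pairwise disjoint, the pair $(x,y)$ can lie in a square $[a_j,b_j]^2$ only if $x\in[a_j,b_j]$, and for $x$ interior to some $]a_i,b_i[$ the $i$-th square is the only one possible (any $a_j$ or $b_j$ inside $]a_i,b_i[$ would force the open intervals to overlap). Consequently, for all other $y$ one has $T_I(x,y)=\min(x,y)$, and each case reduces to an elementary analysis. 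I would also record that $T_i(0,\cdot)=0$ for every t-norm.

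Three of the branches are then immediate. If $x=0$, then $T_I(0,y)=0$ for every $y$ (either $\min(0,y)=0$, or the block evaluates $b_iT_i(0,\cdot)=0$), so the supremum is $1$. If $x\notin\bigcup_i[a_i,b_i]$ and $x\neq 0$, then $T_I(x,y)=\min(x,y)$ throughout, which vanishes exactly at $y=0$, giving $N_{T_I}(x)=0$. If $x\in[a_i,b_i]$ with $a_i>0$, the block outputs $a_i+(b_i-a_i)T_i(\cdots)\geq a_i>0$, so no $y\in[a_i,b_i]$ makes $T_I$ zero, while outside the block $\min(x,y)$ with $x\geq a_i>0$ forces $y=0$; hence the zero-set is $\{0\}$ and $N_{T_I}(x)=0$.

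The substantive branch is $a_i=0$ and $x\in\,]0,b_i]$. Here, for $y\in[0,b_i]$ one has $T_I(x,y)=b_iT_i(x/b_i,y/b_i)$, which is zero iff $T_i(x/b_i,y/b_i)=0$, whereas for $y>b_i$ one has $T_I(x,y)=\min(x,y)=x>0$. Thus the zero-set equals $\{y\in[0,b_i]:T_i(x/b_i,y/b_i)=0\}$; writing $y=b_it$ and using $b_i>0$, its supremum is $b_i\,\sup\{t\in[0,1]:T_i(x/b_i,t)=0\}=b_i\,N_{T_i}(x/b_i)$. I would therefore expect the third branch of the statement to read $N_{T_I}(x)=b_i\,N_{T_i}(x/b_i)$: the printed $N_{T_i}(x/b_i)$ appears to omit the scaling factor $b_i$ introduced by the affine embedding of the block. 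The two coincide precisely when $b_i=1$ (e.g. for a single summand), and a quick sanity check with $T_i=T_L$ on $[0,1/2]$ yields $N_{T_I}(x)=\tfrac12-x=b_iN_{T_i}(x/b_i)$ rather than $1-2x$, which supports the corrected form.

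The main obstacle I anticipate is not the generic supremum computation but the bookkeeping at shared endpoints: because the closed intervals may meet (when $b_i=a_j$), a single $x$ can belong to two closed blocks, so one must check that the two local formulas and the $\min$ rule agree there, both to see that $T_I$ is well defined and to be sure the supremum is unaffected. I would dispose of this exactly as in the continuity argument for ordinal sums, verifying that at such junctions the block value matches $\min(x,y)$, and noting that the membership conditions in the statement ($x\in[a_i,b_i]$ versus $x\in\,]a_i,b_i]$) are precisely what select the correct branch at the endpoints $x=a_i$ and $x=b_i$.
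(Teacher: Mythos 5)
Your proof is correct and follows the same strategy as the paper's own proof: evaluate $N_{T_I}(x)=\sup\{y\in[0,1]:T_I(x,y)=0\}$ branch by branch from Eq.~(\ref{TSum}), and the three easy branches ($x=0$; $x$ outside every $[a_i,b_i]$ and nonzero; $x\in[a_i,b_i]$ with $a_i>0$) are handled in the paper essentially as you handle them. The important point is the substantive branch, where you have caught a genuine error: your corrected value $b_i\,N_{T_i}(x/b_i)$ is right, and the printed formula $N_{T_i}(x/b_i)$ is wrong. For $a_i=0$ and $x\in\,]0,b_i]$ the zero-set of $T_I(x,\cdot)$ is $\{y\in[0,b_i]:T_i(x/b_i,y/b_i)=0\}$, which is the image under multiplication by $b_i$ of $\{z\in[0,1]:T_i(x/b_i,z)=0\}$, so its supremum is $b_i\,N_{T_i}(x/b_i)$. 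The paper's proof commits exactly the slip you anticipated: in its chain of equalities it replaces $\sup\{y\in[0,b_i]:T_i(x/b_i,y/b_i)=0\}$ by $\sup\{y/b_i\in[0,1]:T_i(x/b_i,y/b_i)=0\}$, i.e.\ it takes the supremum of the rescaled values $y/b_i$ while still calling it the supremum over $y$; the two differ by the factor $b_i$ and coincide only when $b_i=1$. Your \L ukasiewicz check settles the matter: for the single summand $(0,\tfrac12,T_L)$ and $x\in\,]0,\tfrac12]$, the zero-set of $T_I(x,\cdot)$ is $[0,\tfrac12-x]$, so $N_{T_I}(x)=\tfrac12-x=b_i\,N_{T_i}(x/b_i)$, whereas the proposition as printed would give $1-2x$. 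Your extra bookkeeping at shared endpoints ($b_j=a_i$, where $x$ lies in two closed blocks) is sound but only confirms consistency of the branches; the paper passes over it silently, as it also does over the fact that in its first case the zero-set is $\{0\}$ rather than $\emptyset$ (harmless, since both have supremum $0$).
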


\begin{proof}
If $x\in [a_i, b_i]$ and $a_i> 0$, then
\begin{eqnarray*}
 N_{T_I}(x) & \hspace{-0.3cm}= \hspace{-0.3cm} &  \sup\left\{y\in [0,1]:  T_I\left(x, y\right)=0\right\}\\
\hspace{-0.3cm}& \hspace{-0.3cm}= \hspace{-0.3cm} & \sup\left\{y{\in} [0,1]: a_i {+} (b_i{-}a_i) T_i\left(\dfrac{x-a_i}{b_i-a_i}, \dfrac{y-a_i}{b_i-a_i}\right){=}0\right\}\\
\hspace{-0.3cm}& \hspace{-0.3cm}= \hspace{-0.3cm} & \sup\emptyset \\  
\hspace{-0.3cm}& \hspace{-0.3cm}= \hspace{-0.3cm} &0. 
\end{eqnarray*}

If $x\not\in \bigcup_{i\in I} [a_i,b_i] \cup \{0\}$, then
\begin{eqnarray*}
N_{T_I}(x) & = & \sup\left\{y\in [0,1]:  T_I\left(x, y\right)=0\right\}\\
                 & = & \sup\left\{y\in [0,1]:  \min(x, y)=0\right\}\\
                 & = & 0.
\end{eqnarray*}

If $x= 0$ then, trivially by Remark \ref{remNT} and Proposition \ref{PTSum}, $N_{T_I}(x)=1$. 

If $x\in ]a_i, b_i]$ and $a_i= 0$, then
\begin{eqnarray*}
N_{T_I}(x) 
& \hspace{-0.3cm}= \hspace{-0.3cm} &\sup\left\{y\in [0,1]:  T_I\left(x, y\right)=0\right\}\\
& \hspace{-0.3cm}= \hspace{-0.3cm}  & \sup\left\{y\in [0,b_i]: b_i T_i\left(\dfrac{x}{b_i},\dfrac{y}{b_i}\right)= 0\right\} \cup \{y\in ]b_i,1]:\min(x,y)\} \\ 
&  \hspace{-0.3cm}= \hspace{-0.3cm}  & \sup\left\{\dfrac{y}{b_i}\in [0,1]: T_i\left(\dfrac{x}{b_i}, \dfrac{y}{b_i}\right)=0\right\}\\
&  \hspace{-0.3cm}= \hspace{-0.3cm}  & \sup\left\{z\in [0,1]: T_i\left(\dfrac{x}{b_i}, z\right)=0\right\}\\
&  \hspace{-0.3cm}= \hspace{-0.3cm}  & N_{T_i}\left(\dfrac{x}{b_i}\right). 
\end{eqnarray*}
\end{proof}


\begin{proposition}
Let $(S_i)_{i\in I}$ be a family of t-conorms and $(]a_i, b_i[)_{i\in I}$ be a family of nonempty, pairwise disjoint open subintervals of $[0,1]$. Then 
\begin{eqnarray}\label{NSum}
N_{S_I}(x)=\left\{
\begin{array}{ll}
1 & \mbox{if}~  x\in [a_i, b_i], b_i<1 ~\mbox{or} x\not\in \bigcup_{i\in I} [a_i,b_i] \cup \{1\} \\
0 & \mbox{if}~x=1 \\
(1-a_i) N_{S_i}\left(\dfrac{x-a_i}{1-a_i}\right)+a_i & \mbox{if}~ x\in [a_i, b_i[~\mbox{and} ~b_i=1.
\end{array}
\right.
\end{eqnarray}
\end{proposition}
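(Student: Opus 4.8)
The plan is to mirror the proof of the preceding proposition for t-norms, working directly from the definition $N_{S_I}(x)=\inf\{y\in[0,1]:S_I(x,y)=1\}$ and splitting into the four cases that appear in the claimed formula. Throughout I would invoke Proposition \ref{PSSum} (so that $S_I$ is a genuine t-conorm and $N_{S_I}$ is therefore a fuzzy negation by Remark \ref{remNT}) and the defining expression (\ref{SSum}) for $S_I$. First, for $x=1$ the pair $(1,0)$ lies in the ``$\max$'' branch (unless some summand interval is all of $[0,1]$), so $S_I(1,0)=\max(1,0)=1$; hence $0$ belongs to the set whose infimum is taken, giving $N_{S_I}(1)=0$. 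Next, for $x\notin\bigcup_{i}[a_i,b_i]\cup\{1\}$, since $x\notin[a_i,b_i]$ for every $i$ the pair $(x,y)$ never lies in a square $[a_i,b_i]^2$, so $S_I(x,y)=\max(x,y)$ for all $y$; as $x<1$, the equation $\max(x,y)=1$ forces $y=1$, whence $N_{S_I}(x)=\inf\{1\}=1$.

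For the third case, $x\in[a_i,b_i]$ with $b_i<1$, I would argue that if $y\in[a_i,b_i]$ then $S_I(x,y)=a_i+(b_i-a_i)S_i(\cdot,\cdot)\le b_i<1$, so no such $y$ satisfies $S_I(x,y)=1$, while if $y\notin[a_i,b_i]$ then $S_I(x,y)=\max(x,y)$, which equals $1$ only for $y=1$ because $x\le b_i<1$. Thus the only admissible value is again $y=1$, and $N_{S_I}(x)=1$, matching the claim.

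The substantive case, and the one I expect to demand the most care, is the last: $x\in[a_i,b_i[$ with $b_i=1$, so the summand interval is $[a_i,1]$. Here for $y<a_i$ one has $S_I(x,y)=\max(x,y)=x<1$, contributing no solutions, while for $y\in[a_i,1]$ one has $S_I(x,y)=1$ iff $S_i\!\left(\frac{x-a_i}{1-a_i},\frac{y-a_i}{1-a_i}\right)=1$. Introducing the order-preserving affine substitution $z=\frac{y-a_i}{1-a_i}$, which maps $[a_i,1]$ bijectively and monotonically onto $[0,1]$, the condition becomes $S_i\!\left(\frac{x-a_i}{1-a_i},z\right)=1$, and passing the infimum through this reparametrisation gives $N_{S_I}(x)=a_i+(1-a_i)\inf\{z:S_i(\cdot,z)=1\}=a_i+(1-a_i)N_{S_i}\!\left(\frac{x-a_i}{1-a_i}\right)$, exactly the stated expression.

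The hard part, then, is entirely confined to this final case, and it amounts to two checks: confirming that the ``$\max$'' branch ($y<a_i$) genuinely adds no solution — which holds precisely because $x<1$ — and verifying that the infimum commutes with the increasing affine map $z\mapsto a_i+(1-a_i)z$, so that no boundary behaviour of $S_i$ near the value $N_{S_i}\!\left(\frac{x-a_i}{1-a_i}\right)$ is lost when rescaling. The other three cases are routine once the correct branch of (\ref{SSum}) is selected for each $y$.
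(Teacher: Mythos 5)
Your proposal is correct and follows essentially the same route as the paper's own proof: the same four-case split driven by the defining infimum $N_{S_I}(x)=\inf\{y\in[0,1]:S_I(x,y)=1\}$, the same branch selection in the ordinal-sum expression for $S_I$, and the same increasing affine substitution $z=\frac{y-a_i}{1-a_i}$ in the $b_i=1$ case. The only differences are cosmetic: you spell out why the summand branch contributes no solutions when $b_i<1$ and why the infimum commutes with the rescaling, points the paper leaves implicit.
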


\begin{proof}
If $x\in [a_i, b_i]$ and $b_i<1$, then
\begin{eqnarray*}
 N_{S_I}(x)
& \hspace{-0.3cm}= \hspace{-0.3cm}  & \inf\left\{y\in [0,1]:  S_I\left(x, y\right)=1\right\}\\
& \hspace{-0.3cm}= \hspace{-0.3cm}  & \inf\left\{y\in [a_i, b_i]: a_i{+}(b_i{-}a_i) S_i\left(\dfrac{x{-}a_i}{b_i{-}a_i},\dfrac{y{-}a_i}{b_i{-}a_i}\right)= 1\right\}  \cup ~\{y\in [0,a_i[\cup]b_i, 1]:\max(x, y)=1\}\\
& \hspace{-0.3cm}= \hspace{-0.3cm}   & \inf\emptyset\cup \{1\} \\  
& \hspace{-0.3cm}= \hspace{-0.3cm}   & 1. 
\end{eqnarray*}

If $x\not\in \bigcup_{i\in I} [a_i,b_i] \cup \{1\}$, then
\begin{eqnarray*}
N_{S_I}(x) & = & \inf\left\{y\in [0,1]:  S_I\left(x, y\right)=1\right\}\\
                 & = & \inf\left\{y\in [0,1]:  \max(x, y)=1 \right\}\\
                 & = & 1.
\end{eqnarray*}

If $x= 1$ then, trivially by Remark \ref{remNT} and Proposition \ref{PSSum}, $N_{S_I}(x)=0$. 

If $x\in [a_i, b_i[$ and $b_i= 1$, then
\begin{eqnarray*}
 N_{S_I}(x) 
& \hspace{-0.3cm}= \hspace{-0.3cm}   & \inf\left\{y\in [0,1]:  S_I\left(x, y\right)=1\right\} \\
& \hspace{-0.3cm}= \hspace{-0.3cm}  & \inf\left\{y\in [a_i, 1]: a_i+(1-a_i) S_i\left(\dfrac{x{-}a_i}{1{-}a_i}{,}\dfrac{y{-}a_i}{1{-}a_i}\right)=1\right\} \cup~ \{y\in [0,a_i[: \max(x, y)=1\}\\
& \hspace{-0.3cm}= \hspace{-0.3cm}   & (1{-}a_i) \inf\left\{\dfrac{y-a_i}{1-a_i}\in [0, 1]:  S_i\left(\dfrac{x-a_i}{1-a_i}, \dfrac{y-a_i}{1-a_i}\right){=}1\right\} +a_i\\
& \hspace{-0.3cm}= \hspace{-0.3cm}   & (1-a_i) \inf\left\{z\in [0, 1]:  S_i\left(\dfrac{x-a_i}{1-a_i}, z\right)=1\right\}+a_i\\
& \hspace{-0.3cm}= \hspace{-0.3cm}   & (1-a_i) N_{S_i}\left(\dfrac{x-a_i}{1-a_i}\right)+a_i.
\end{eqnarray*}
\end{proof}

In the case of fuzzy implications, we have that the natural negation of the ordinal sums of a family of fuzzy implications, according to 
definition in the Theorem \ref{theo-JI-RS} (and also for the proposal in \cite{B17,DK16,DK161,DK17,Su15}) always result in $N_\bot$. So, in order to obtain,  
more interesting natural negations we propose the following notion of ordinal sums for fuzzy implications:\\

\begin{definition}\label{def-OSINova}
 Let $(J_i)_{i\in I}$ be a family of fuzzy implications and $(]a_i, b_i[)_{i\in I}$ be a family of nonempty, pairwise disjoint open 
 subintervals of $[0,1]$ such that $b_i< 1$ for each $i\in I$.  Then the function $J_I:[0,1]^2 \rightarrow [0,1]$ defined by
\begin{eqnarray}\label{NSum}
J_I(x,y)=\left\{
\begin{array}{ll}
(1-b_i)+(b_i-a_i) ~J_i\left(\dfrac{x-a_i}{b_i-a_i},y\right) & \mbox{if}~  x\in [a_i, b_i],  \\
J_{KD}(x,y) & \mbox{otherwise}.
\end{array}
\right.
\end{eqnarray}
is  called of the left ordinal sum of the summands $(a_i, b_i, J_i)_{i\in I}$.\\
\end{definition}

\begin{proposition}\label{pro-JI-nova}
        Let $(J_i)_{i\in I}$ be a family of fuzzy implications and $(]a_i, b_i[)_{i\in I}$ be a family of nonempty, pairwise disjoint open 
subintervals of $[0,1]$ such that $b_i< 1$ for each $i\in I$. Then the function $J_I:[0,1]^2 \rightarrow [0,1]$ defined in Equation (\ref{NSum}) is a 
fuzzy implication.
      \end{proposition}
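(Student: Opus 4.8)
The plan is to verify the five defining conditions J1--J5 for $J_I$, following the template of the five-case argument in Proposition \ref{PNSum-new}. A preliminary point to settle is well-definedness: the summand formula is applied on the \emph{closed} intervals $[a_i,b_i]$, whereas only the open intervals $]a_i,b_i[$ are assumed pairwise disjoint, so I would first check consistency at any shared endpoint $b_i=a_j$, using $J_i(1,0)=0$, $J_i(0,y)=1$ and the values of $J_{KD}$ (or else simply assume the closed intervals are pairwise disjoint). A structural observation that organizes the whole proof is that the active branch of $J_I$ is selected by the position of the first argument $x$ \emph{alone}; hence, for fixed $x$, the map $y\mapsto J_I(x,y)$ never leaves a single branch.

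For J3, J4 and J5 I would evaluate directly. Since $b_i<1$ for every $i$, the point $x=1$ always lies in the complement, so $J_I(1,1)=J_{KD}(1,1)=1$ and $J_I(1,0)=J_{KD}(1,0)=0$, giving J4 and J5. For J3, either $0$ lies in the complement, where $J_{KD}(0,0)=1$, or $0=a_i$ for a (necessarily unique) summand, where the block formula gives $(1-b_i)+b_i\,J_i(0,0)=1$ by J3 for $J_i$. Condition J2 is then immediate from the structural observation: fixing $x$, the function $y\mapsto J_I(x,y)$ equals either $(1-b_i)+(b_i-a_i)J_i\bigl(\tfrac{x-a_i}{b_i-a_i},y\bigr)$, non-decreasing in $y$ because $J_i$ is non-decreasing in its second argument and $b_i-a_i>0$, or $J_{KD}(x,\cdot)=\max(1-x,\cdot)$, which is non-decreasing; no comparison across branches is required.

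The real work, and the step I expect to be the main obstacle, is J1. Here I would fix $y$, take $x_1<x_2$, and prove $J_I(x_1,y)\geq J_I(x_2,y)$ by a case split on the location of $x_1$ and $x_2$, exactly the five-case structure of Proposition \ref{PNSum-new}: both in one block; in two distinct blocks; $x_1$ in a block and $x_2$ in the complement (and the mirror case); and both in the complement. Within a single block, monotonicity follows from $J_i$ being non-increasing in its first argument together with the positive scaling; for two points in the complement it follows from $J_{KD}(\cdot,y)=\max(1-\cdot,y)$ being non-increasing; and for two different blocks one compares the two images, each of which lies in $[1-b_i,1-a_i]$ (the analogue, for a fixed $y$, of Lemma \ref{LNSum}).

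The delicate sub-cases are those that straddle a block boundary, where a block value must be weighed against a Kleene--Dienes value. Concretely, if $x_1\in[a_i,b_i]$ and $x_2>b_i$ lies in the complement, then $J_I(x_2,y)=\max(1-x_2,y)$; since the block value satisfies $J_I(x_1,y)\geq 1-b_i>1-x_2$, the term $1-x_2$ is automatically dominated, and the binding requirement reduces to $J_I(x_1,y)\geq y$, i.e. $(1-b_i)+(b_i-a_i)J_i\bigl(\tfrac{x_1-a_i}{b_i-a_i},y\bigr)\geq y$, together with the symmetric inequality at a left endpoint. Establishing this is the crux, and I would isolate precisely this inequality, bound the block term from below, and examine the regime of large $y$, since that is exactly where the shifted block range $[1-b_i,1-a_i]$ interacts most tightly with the Kleene--Dienes tail $y$ and where the argument is most sensitive.
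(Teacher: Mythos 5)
You correctly reproduce the skeleton of the paper's own argument (the five-case analysis for J1, the single-branch observation for J2, and the direct evaluations of J3--J5 using $b_i<1$), and you were right to isolate the cross-boundary case --- $x_1$ in a block, $x_2$ to its right in the complement --- as the crux, reducing it to the inequality $(1-b_i)+(b_i-a_i)\,J_i\bigl(\tfrac{x_1-a_i}{b_i-a_i},y\bigr)\geq y$. The gap in your proposal is that this inequality is left unproved, and in fact it cannot be proved: it is false in general, so no completion of your plan exists. Take $y=1$: J1 together with J4 would force $J_I(x,1)=1$ for every $x$, yet $J_I(b_i,1)=(1-b_i)+(b_i-a_i)\,J_i(1,1)=1-a_i<1$ whenever $a_i>0$, while $J_I(1,1)=J_{KD}(1,1)=1$ (since $b_i<1$ for all $i$, the point $x=1$ always falls in the Kleene--Dienes branch). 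So for any family with some $a_i>0$, and arbitrary summands $J_i$, the function $J_I$ violates J1. The failure is not confined to $a_i>0$ or to $y=1$: with the single summand $]a_1,b_1[\,=\,]0,1/2[$ and $J_1=J_{RS}$ one gets $J_I(1/2,\,0.9)=1/2$ but $J_I(1,\,0.9)=0.9$.

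This means the defect is not in your approach but in the statement itself, and the paper's own proof breaks down at exactly the spot you were circling: in its case 3 of J1 it argues that $J_I(y,z)=\max\{1-y,z\}<J_I(x,z)$ solely from $1-y<1-b_i$, silently ignoring that the maximum may be attained at $z$, which can exceed the upper bound $1-a_i$ of the block values (the mirror case 4 is harmless, because there the $\max$ sits on the side that needs to be large). Your instinct that the regime of large $y$ is where ``the argument is most sensitive'' was precisely correct; what is missing is the conclusion that it actually fails there. A repaired statement needs additional hypotheses --- for instance a single summand anchored at $a_1=0$ together with a condition such as $J_1(x,y)\geq y$ for all $x,y$ (satisfied by $J_{KD}$, $J_G$, $J_{LK}$, but not by $J_{RS}$), under which your boundary inequality does hold --- or else a different outer function in place of $J_{KD}$. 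Your secondary remark about well-definedness at shared endpoints $b_i=a_j$ is also a real (if minor) issue for this left ordinal sum, since $J_i(1,y)$ need not vanish, but it is dominated by the falsity of J1.
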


      \begin{proof}
 J1) Let $x,y,z\in [0,1]$ such that $x\leq y$. We have several cases:
       \begin{enumerate}
        \item If $x, y \in [a_i,b_i]$ for some $i\in I$ then, because $\dfrac{x-a_i}{b_i-a_i}\leq \dfrac{y-a_i}{b_i-a_i}$ and $J_i$ satisfy J1, 
        we have that $J_i\left(\dfrac{y-a_i}{b_i-a_i},z\right)\leq J_i\left(\dfrac{x-a_i}{b_i-a_i},z\right)$ and therefore        
        $J_I(y,z)=(1-b_i)+(b_i-a_i) ~J_i\left(\dfrac{y-a_i}{b_i-a_i},z\right)\leq (1-b_i)+(b_i-a_i) ~J_i\left(\dfrac{x-a_i}{b_i-a_i},z\right)=J_I(x,z)$. 
      \item If $x\in [a_i,b_i]$ and $y\in [a_j,b_j]$ for some $i\neq j\in I$ then, $J_I(x,z)\in [1-b_i,1-a_i]$ and $J_I(y,z)\in [1-b_j,1-a_j]$. But, because 
      $x\leq y$ and $[a_i,b_i]$ be disjoint of $[a_j,b_j]$, 
      we have that $b_i< a_j$ and so $1-a_j< 1-b_i$. Therefore,  $J_I(y,z)< J_I(x,z)$.
  \item If $x\in [a_i,b_i]$ for some $i\in I$ and $y\not\in \bigcup_{j\in I} [a_j,b_j]$  then, $J_I(x,z)\in [1-b_i,1-a_i]$  where 
  $J_I(y,z)=J_{KD}(y,z)=\max\{1-y,z\}$. But, because $ x< y$ 
  then $1-y < 1-b_i$ and therefore,  $J_I(y,z)< J_I(x,z)$.
  \item The case  $x\not\in \bigcup_{i\in I}[a_i,b_i]$ and $y\in [a_j,b_j]$ for some $i\neq j\in I$ is analogous to previou one.
  \item If  $x,y\not\in \bigcup_{i\in I}[a_i,b_i]$ then $J_I(y,z)=J_{KD}(y,z)=\max\{1-y,z\}\leq  \max\{1-x,z\}= J_{KD}(x,z)=J_I(x,z)$.
       \end{enumerate}

 \noindent J2) Let $x,y,z\in [0,1]$ such that $y\leq z$. We have two cases:
       \begin{enumerate}
        \item If $x \in [a_i,b_i]$ for some $i\in I$ then, because $J_i$ satisfy J2,  
        $J_I(x,y)=(1-b_i)+(b_i-a_i) ~J_i\left(\dfrac{x-a_i}{b_i-a_i},y\right)
        \leq (1-b_i)+(b_i-a_i) ~J_i\left(\dfrac{x-a_i}{b_i-a_i},z\right)=J_I(x,z)$.    
         \item If  $x\not\in \bigcup_{i\in I}[a_i,b_i]$ then $J_I(x,y)=J_{KD}(x,y)=\max\{1-x,y\}\leq  \max\{1-x,z\}= J_{KD}(x,z)=J_I(x,z)$.
        \end{enumerate}

 \noindent J3) If $a_i=0$ for some $i\in I$ then $J_I(0,0)=(1-b_i)+b_i ~J_i\left(\dfrac{0}{b_i},0\right)=1-b_i+b_i=1$. 
 
 \indent If $a_i\neq 0$ 
          for each $i\in I$ then  $J_I(0,0)=J_{KD}(0,0)=1$.

 \noindent J4) Since $b_i\neq 1$ for each $i\in I$ then   $J_I(1,1)=J_{KD}(1,1)=1$.

 \noindent J5)  Since $b_i\neq 1$ for each $i\in I$ then $J_I(1,0)=J_{KD}(1,0)=0$.
      \end{proof}\\

\begin{theorem}
  Let $(J_i)_{i\in I}$ be a family of fuzzy implications and $(]a_i, b_i[)_{i\in I}$ be a family of nonempty, pairwise disjoint open 
subintervals of $[0,1]$ such that $b_i< 1$ for each $i\in I$. For each $i\in I$ let $N_i$ the natural negation of the implication $J_i$ and $N_I$ their ordinal sums.
Then, the natural negation of the left ordinal sum of the summands $(a_i, b_i, J_i)_{i\in I}$, denoted by $N_{J_I}$, is such that $N_{J_I}=N_I$. 
\end{theorem}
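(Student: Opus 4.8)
The plan is to compute the natural negation $N_{J_I}$ directly from its definition $N_{J_I}(x)=J_I(x,0)$ (Definition \ref{DNJ}) and then match the result, branch by branch, against the ordinal sum $N_I$ given by Eq. (\ref{NSum}). Since $J_I$ is defined by cases according to whether $x$ falls in some $[a_i,b_i]$, the natural move is simply to evaluate $J_I(x,0)$ separately on each piece.

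For $x\in[a_i,b_i]$, substituting $y=0$ into the first branch of the definition of $J_I$ (Definition \ref{def-OSINova}) gives $J_I(x,0)=(1-b_i)+(b_i-a_i)\,J_i\!\left(\tfrac{x-a_i}{b_i-a_i},0\right)$. Because $N_i$ is the natural negation of $J_i$, that is $N_i(t)=J_i(t,0)$, the inner term equals $N_i\!\left(\tfrac{x-a_i}{b_i-a_i}\right)$, so the whole expression is exactly the first branch of the ordinal sum $N_I$. For $x\notin\bigcup_{i\in I}[a_i,b_i]$, the second branch gives $J_I(x,0)=J_{KD}(x,0)=\max(1-x,0)$; since $x\in[0,1]$ we have $1-x\geq 0$, whence $\max(1-x,0)=1-x=N^S(x)$, which is precisely the ``otherwise'' branch of $N_I$. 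Combining the two cases yields $N_{J_I}(x)=N_I(x)$ for all $x\in[0,1]$, hence $N_{J_I}=N_I$.

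The one point worth verifying is consistency at the interval endpoints, where one might worry whether the two branches clash: conditions J3 and J5 for $J_i$ give $J_I(a_i,0)=(1-b_i)+(b_i-a_i)J_i(0,0)=1-a_i$ and $J_I(b_i,0)=(1-b_i)+(b_i-a_i)J_i(1,0)=1-b_i$, which agree with $N^S(a_i)$ and $N^S(b_i)$ (using $b_i<1$ so that $\max(1-b_i,0)=1-b_i$). Thus both descriptions coincide on the boundary, and the matching is unambiguous.

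There is no substantive obstacle here: the argument is a direct case computation. The essential observation is that the ``filler'' implication $J_{KD}$ was chosen precisely so that $J_{KD}(\cdot,0)=N^S$, matching the standard negation that serves as the filler in the ordinal sum of negations; this alignment is what makes the identity $N_{J_I}=N_I$ fall out immediately, and it is also the reason the earlier ordinal-sum proposals (which use $J_{RS}$, whose natural negation is $N_\bot$) fail to produce interesting natural negations.
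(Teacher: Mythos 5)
Your proof is correct and follows essentially the same route as the paper's: evaluate $N_{J_I}(x)=J_I(x,0)$ case by case, use $N_i(\cdot)=J_i(\cdot,0)$ on the intervals $[a_i,b_i]$, and use $J_{KD}(x,0)=\max(1-x,0)=1-x=N^S(x)$ off them. The only cosmetic difference is that the paper dispatches $x\in\{0,1\}$ by noting both $N_{J_I}$ and $N_I$ are fuzzy negations, whereas you handle all points uniformly by the branch computation (and add an endpoint-consistency remark), which is equally valid.
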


\begin{proof}
 Since, by Proposition \ref{pro-JI-nova}, $J_I$ is a fuzzy implication then their natural negation  $N_{J_I}$ is in fact a fuzzy negation. On the other hand, 
 by Proposition \ref{PNSum-old}, $N_I$ is also a fuzzy negation. Then $N_{J_I}(0)=N_I(0)$ and $N_{J_I}(1)=N_I(1)$. 
 Let $x\in (0,1)$. If $x\in [a_i,b_i]$ for some $i\in I$ then $N_{J_I}(x)=J_I(x,0)=(1-b_i)+(b_i-a_i) ~J_i\left(\dfrac{x-a_i}{b_i-a_i},0\right)=(1-b_i)+(b_i-a_i)
 ~N_i\left(\dfrac{x-a_i}{b_i-a_i}\right)=N_I(x)$. \\
 Now, if $x\not\in  [a_i,b_i]$ for each $i\in I$ then $N_{J_I}(x)=J_I(x,0)=J_{KD}(x,0)=1-x=N_I(x)$. 
\end{proof}\\

\section{Final Remarks}

Ordinal sum is an important method that allows us to construct, from a family of operators of a certain class, a new operator of the same class. For instance, 
in \cite{SS63}, it was proved that the ordinal sums of a family of triangular norms and triangular conorms result in triangular norms and triangular conorms, 
respectively. This same idea was used for copulas \cite{Nel06}, 
overlaps functions \cite{DB14}, uninorms \cite{MeZ16,MeZ17}, fuzzy implications \cite{DK16,Su15} and fuzzy negations \cite{BSJIFS}.

In \cite{BSJIFS}, it was proved that ordinal sums of a family of fuzzy negations result in a fuzzy negation. In this paper, we studied the ordinal sums of 
some known classes of fuzzy negations and some properties of fuzzy negations, as well as, we studied the ordinal sums of a family of t-norms, t-conorms and fuzzy implications. In particular, we introduce a new way to make an ordinal sum of fuzzy implications which only consider the first variable to decide the case to be considered. The advantage of this new ordinal sums of fuzzy implication, when compared with the several proposes existent in the literature, is that the natural negation is not trivial and commute with the ordinal sums of the natural negations of the summands in the following sense 
\begin{eqnarray*}
\xymatrix{
(a_i,b_i,J_i)_{i\in I} \ar@{->}[rr]^{\hspace{0.5cm} Prop.~ \ref{pro-JI-nova}} \ar@{->}[d]_{Def. ~\ref{DNJ}} & & J_I  \ar@{->}[d]^{Def. ~\ref{DNJ}}\\
(a_i,b_i,N_{J_i})_{i\in I}  \ar@{->}[rr]_{\hspace{0.5cm} Prop.~\ref{PNSum-old}} & & N_{J_I}\\
}
\end{eqnarray*}

Several extensions or types of fuzzy set theory had been proposed in order to solve the problem of constructing the membership degree functions of fuzzy 
sets or/and to represent the uncertainty associated to the considered problem in a way different from fuzzy set theory \cite{bustince15}. As further works, 
we intend to extend the study of ordinal sums of fuzzy negations for some of the more important extensions of fuzzy set theory, such as, ordinal sums of $n$-dimensional 
fuzzy negations \cite{BM18}, ordinal sums of intuitionistic fuzzy negations \cite{costa},  ordinal sums of interval-valued fuzzy 
negations \cite{bedregal10}, 
ordinal sums of interval-valued Atanassov's intuitionistic fuzzy negations \cite{RB17} and 
ordinal sums of typical hesitant fuzzy negations \cite{BSB14}.

\section*{Acknowledgment}

This work is supported by Brazilian National Council of Technological and Scientific Development CNPq (Proc. 307781/2016-0 and 404382/2016-9).

\end{document}